%% -*- coding: utf-8 -*-

\documentclass[a4paper,12pt]{amsart}

\usepackage[includehead,twoside=False]{geometry}

\usepackage[active]{srcltx}\usepackage{amsmath,amsthm,amsxtra}
\usepackage{amssymb}
\usepackage{mathabx}

\usepackage[mathscr]{eucal}

\usepackage{bm}
\usepackage[all]{xy}

%%% New theorem
\usepackage{aliascnt}

\theoremstyle{plain}
\newtheorem{thm}{Theorem}[section]
\newtheorem*{thm*}{Theorem}

\newaliascnt{prop}{thm}
\newaliascnt{cor}{thm}
\newaliascnt{lem}{thm}
\newaliascnt{claim}{thm}
\newaliascnt{defn}{thm}
\newaliascnt{ques}{thm}
\newaliascnt{conj}{thm}
\newaliascnt{fact}{thm}
\newaliascnt{rem}{thm}
\newaliascnt{ex}{thm}
\newtheorem{prop}[prop]{Proposition}
\newtheorem{cor}[cor]{Corollary}
\newtheorem{lem}[lem]{Lemma}

\newtheorem*{prop*}{Proposition}
\newtheorem*{cor*}{Corollary}
\newtheorem*{lem*}{Lemma}
\newtheorem*{claim*}{Claim}
%% defn
\theoremstyle{definition}
\newtheorem{defn}[defn]{Definition}

\newtheorem*{defn*}{Definition}
\newtheorem*{ques*}{Question}
\newtheorem*{conj*}{Conjecture}

\newtheorem*{prob*}{Problem}
%% rem

\newtheorem{rem}[rem]{Remark}
\newtheorem{ex}[ex]{Example}
\newtheorem*{fact*}{Fact}
\newtheorem*{rem*}{Remark}
\newtheorem*{ex*}{Example}
\aliascntresetthe{prop}
\aliascntresetthe{cor}
\aliascntresetthe{lem}
\aliascntresetthe{claim}
\aliascntresetthe{defn}
\aliascntresetthe{ques}
\aliascntresetthe{conj}
\aliascntresetthe{fact}
\aliascntresetthe{rem}
\aliascntresetthe{ex}

%%% item
\usepackage[pointedenum]{paralist}%%% ref
\usepackage{varioref}
\labelformat{equation}{\textnormal{(#1)}}
\labelformat{enumi}{\textnormal{#1}}
\def\equationautorefname~#1\null{#1\null}
\def\itemautorefname~#1\null{(#1)\null}

\usepackage[a4paper,dvipdfmx]{hyperref}
\def\textsectionN~{\textsection{}}

%%%%%%%%%%%%%%%%%%%%%%%%%%%%%%%%%%%%%%%%%%%%%%%%%% 
%% symbols
%%%%%%%%%%%%%%%%%%%%%%%%%%%%%%%%%%%%%%%%%%%%%%%%%% 

\newcommand{\sU}{\mathcal{U}}

\newcommand{\Xo}{X\spcirc}

\newcommand{\sQ}{\mathscr{Q}}
\newcommand{\sS}{\mathscr{S}}
\newcommand{\sHom}{\mathscr{H}om}

\newcommand{\Xdo}{{{X'}\spcirc}}
\newcommand{\Zo}{{Z\spcirc}}

\renewcommand\phi{\varphi}
\renewcommand\epsilon{\varepsilon}
\renewcommand\leq{\leqslant}
\renewcommand\geq{\geqslant}

\makeatletter
\newcommand{\set}{  \@ifstar{\@setstar}{\@set}}\newcommand{\@setstar}[2]{\{\, #1 \mid #2 \,\}}
\newcommand{\@set}[1]{\{ #1 \}}
\makeatother
\newcommand{\lin}[1]{\langle #1 \rangle}
\newcommand{\trans}[1][1]{\raisebox{#1ex}{\scriptsize\kern0.1em$t$\kern-0.1em}}

\newcommand{\PP}{\mathbb{P}}
\newcommand{\cP}{{\PP_{\!\! *}}}
\newcommand{\TT}{\mathbb{T}}
\newcommand{\PN}{\PP^N}

\newcommand{\GN}{\Gr(n, \PP^N)}

\newcommand{\Pv}[1][N]{(\PP^{#1})\spcheck}

\newcommand{\sO}{\mathscr{O}}
\newcommand{\ZZ}{\mathbb{Z}}

\newcommand{\Gr}{\mathbb{G}}
\newcommand\spcirc{^\circ}

\newcommand{\FF}{\mathbb{F}}

\DeclareMathOperator{\Pic}{Pic}

\DeclareMathOperator{\Sing}{Sing}

\DeclareMathOperator{\rank}{rk}

\DeclareMathOperator{\pr}{pr}

\def\Q{\mathbb{Q}}
\def\R{\mathbb{R}}
\def\C{\mathbb{C}}

\def\F{\mathbb{F}}
\def\r+{\mathbb{R}_{\geq 0}}

\def\ep{\varepsilon}

\def\r+{{\R}_{\geq 0}}
\def\q+{{\Q}_{\geq 0}}
\def\P{\mathbb{P}}
\def\arw{\rightarrow}
\def\*c{\C^{\times}}

\newcommand{\pmx}{P_m^{X}}
\newcommand{\pmxo}{{\pmx}\spcirc}

\def\C{\mathbb {C}}

\def\F{\mathbb {F}}
\def\G{\mathbb {G}}

\def\Q{\mathbb {Q}}
\def\R{\mathbb {R}}

\def\T{\mathbb {T}}

\newcommand{\calu}{\mathcal {U}}
\newcommand{\calv}{\mathcal {V}}

\makeatletter

\@addtoreset{equation}{section}
\makeatother

\newcommand\Gn{\Gr(n,\PN)}
\newcommand\Gm{\Gr(m,\PN)}

\newcommand\Lmx[1][x]{\Lambda_{#1}}

\newcommand{\Wm}[1][L]{\sigma_{m,x}^{#1}}

\newcommand{\snxM}[1][2]{\sigma_{n+#1,x}^M}

\newcommand{\Fnm}{\FF(n,m;\PN)}
\newcommand{\PmPm}{P_{n+1}^X \times_{X_{n+1}^*} P_{n+1}^X}
\newcommand{\PmPmo}{(P_{n+1}^X)\spcirc \times_{X_{n+1}^*} (P_{n+1}^X)\spcirc}

\title{On separable higher Gauss maps}
\author[K.~Furukawa]{Katsuhisa~FURUKAWA}
\address{Graduate School of Mathematical Sciences, 
  the University of Tokyo, Tokyo, Japan}
\email{katu@ms.u-tokyo.ac.jp}

\author[A.~Ito]{Atsushi~Ito}
\address{Department of Mathematics, Kyoto University,
  Kyoto, Japan}
\email{aito@math.kyoto-u.ac.jp}

\subjclass[2010]{14N05}
\keywords{higher Gauss map, linearity of a general contact locus, dual defect}

\begin{document}

\maketitle

\begin{abstract}
  We study
  the $m$-th Gauss map in the sense of F.~L.~Zak
  of a projective variety $X \subset \P^N$ over an algebraically closed field in any characteristic.
  For all integer $m$ with $n:=\dim(X) \leq m < N$,
  we show that the contact locus on $X$ of a general tangent $m$-plane
  is a linear variety if the $m$-th Gauss map
  is separable.
  We also show that for smooth $X$ with $n < N-2$,
  the $(n+1)$-th Gauss map 
  is birational if it is separable,
  unless $X$ is the Segre embedding $\PP^1 \times \PP^n \subset \PP^{2n-1}$.
  This is related to L. Ein's classification of varieties with small dual varieties in characteristic zero.
\end{abstract}

\section{Introduction}

Let $X \subset \PN$ be an $n$-dimensional non-degenerate projective variety
over an algebraically closed field in characteristic
$p \geq 0$, and let $m$ be an integer with $n \leq m < N$.
We define the \emph{$m$-th Gauss map} $\gamma_m$,
\[
\gamma_m: P_m^X := \overline{\set*{(L, x) \in \Gm \times X^{sm}}{\TT_xX \subset L}}\rightarrow \Gm,
\]
to be the first projection from the incidence $P_m^X$ to the Grassmann variety $\Gm$,
where $X^{sm}$ is the smooth locus of $X$,
and $\TT_xX \subset \PN$ is the embedded tangent
space to $X$ at $x$.
We set $X_m^* = \gamma_m(P_m^X) \subset \Gm$,
the set of $m$-planes tangent to $X$.
See \cite[I, \textsection{}2]{Zak}.

Note that $\gamma_n$ is identified with the (ordinary) Gauss map
$X \dashrightarrow \Gn: x \mapsto \TT_xX$,
and $X_{N-1}^*$ is the projective dual variety $X^* \subset \Pv$ of $X$.
For a tangent $m$-plane $L \in X_m^*$,
we consider the \emph{contact locus} on $X$ of $L$,
\begin{equation}\label{eq:conloc}
  \overline{\set*{x \in X^{sm}}{\TT_xX \subset L}},
\end{equation}
which is equal to
$\pi(\gamma_m^{-1}(L)_{red}) \subset X$ if $L \in X_m^*$ is general,
where
$\pi: P_m^X \rightarrow X$ is the second projection.
It is also called an \emph{$m$-th contact locus},
and is said to be \emph{general} if so is $L$.
\\

In characteristic $p=0$,
it was well know that a general $m$-th contact locus is a linear variety of $\PN$
for all $m$ with $n \leq m < N$;
the case of $m = n$ is the linearity of a general fiber of the Gauss map (P.~Griffiths and J.~Harris \cite[(2.10)]{GH}, F.~L.~Zak \cite[I, 2.3.~Theorem~(c)]{Zak})
and the case of any $m$ can be shown due to the reflexivity (S.~L.~Kleiman and R.~Piene \cite[pp.~108--109]{KP}).
H.~Kaji \cite{Kaji15} recently gave a formula for the degree of $X_m^*$ in the case when $\gamma_m$ is generically finite, in particular, the case when $X = v_d(\P^n) \subset \PN$ is the Veronese variety with $N = \binom{n+d}{d}$ as a generalization of Boole's formula for $\deg(X^*)$.

In characteristic $p > 0$,
the linearity of a general contact locus does not hold in general;
in particular for $m=n$, several authors gave examples of $X$
such that a general fiber of $\gamma_n$ is not a linear variety
(\cite[\textsection 7]{Wallace}, \cite[I-3]{Kleiman1986},
\cite[Example~4.1]{Kaji1986}, \cite{Kaji1989}, \cite[Example~2.13]{Rathmann}, \cite{Noma2001},
\cite[\textsection{}7]{Fukasawa2005}, \cite{Fukasawa2006}, \cite[\textsection{}5]{gmaptoric}, \cite[Theorem~1.3]{FI}).

However, it was known that a general $m$-th contact locus is a linear variety of $\PN$
if $\gamma_m$ is \emph{separable}
in the case when $m = N-1$ (by the Monge-Segre-Wallace criterion \cite[(2.4)]{HK}, \cite[I-1(4)]{Kleiman1986}),
and in the case when $m = n$ (the first author \cite{expshr}).

In this paper, we prove the following theorem for all $m$.

\begin{thm}\label{thm:m-th-linearity}
  Let $X \subset \PN$ be a projective variety
  in characteristic $p \geq 0$,
  and let $m$ be an integer with $n = \dim(X) \leq m < N$.
  Assume that the $m$-th Gauss map $\gamma_m$ is separable.
  Then a general $m$-th contact locus \autoref{eq:conloc} is a linear variety of $\PN$.
\end{thm}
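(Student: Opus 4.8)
The plan is to read the tangent spaces of a general contact locus off the differential of $\gamma_m$, using separability to make that computation exact, and then to promote the resulting infinitesimal linearity to an honest linear structure by a prolongation argument; this last step is where the real work lies, and it is exactly where the known cases $m=N-1$ and $m=n$ also had to use separability in a non‑formal way.

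First the setup. Since $X_m^* = \gamma_m(P_m^X)$, the morphism $\gamma_m\colon P_m^X\to X_m^*$ is dominant, and by hypothesis separable, so for general $L\in X_m^*$ the fibre $\gamma_m^{-1}(L)$ is reduced. Over $X^{sm}$ the second projection $\pi\colon P_m^X\to X$ is, Zariski‑locally on the base, a Grassmann bundle, with fibre over $x$ the set $\set*{L'\in\Gm}{\TT_xX\subseteq L'}\cong\Gr(m-n-1,\PN/\TT_xX)$; hence $P_m^X$ is smooth there, a general $\gamma_m$‑fibre meets this locus, and $\pi$ is injective on any single fibre $\gamma_m^{-1}(L)$. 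We therefore identify the contact locus $Z:=\pi(\gamma_m^{-1}(L)_{red})$ with $\gamma_m^{-1}(L)_{red}$; since $z\in\TT_zX\subseteq L$ on the dense open part of $Z$, we also get $Z\subseteq L$. Write $\PN=\PP(V)$, $L=\PP(W)$, and let $\hat T_x\subseteq V$ be the cone over $\TT_xX$, so $\hat T_x\subseteq W$.

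Next, the differential. Fix a general $(L,x)\in\gamma_m^{-1}(L)$, so $x\in X^{sm}$ and $(L,x)$ is a smooth point of $P_m^X$ and of the fibre. Inside $T_xX\oplus T_L\Gm=T_xX\oplus\Hom(W,V/W)$ the incidence ``$L$ contains the moving tangent space'' cuts out $T_{(L,x)}P_m^X$; computing with the projective second fundamental form $\mathrm{II}_x\colon\Sym^2 T_xX\to V/\hat T_x$ one finds that $(d\gamma_m)_{(L,x)}$, which sends $(v,\mu)$ to $\mu$, has kernel $\set*{(v,0)}{v\in R_x}$, where
\[
R_x:=\set*{v\in T_xX}{\text{the composite}\ T_xX\xrightarrow{\ \mathrm{II}_x(v,-)\ }V/\hat T_x\twoheadrightarrow V/W\ \text{vanishes}}.
\]
For $m=N-1$ this is the radical of the Hessian of $X$ along $H=L$, and for $m=n$, where $W=\hat T_x$, it is the radical of $\mathrm{II}_x$; so $R_x$ interpolates between the two known situations. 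The vertical tangent directions of the Grassmann bundle inject into $T_L\Gm$, so $d\pi$ is injective on $\ker(d\gamma_m)_{(L,x)}$; and since $\gamma_m$ is separable and $L$ is general, the fibre is smooth at $(L,x)$ of the expected dimension, whence $T_{(L,x)}\gamma_m^{-1}(L)=\ker(d\gamma_m)_{(L,x)}$. Transporting by $d\pi$ yields $T_xZ=R_x$ for general $x\in Z$. Without separability one would only get $T_xZ\subseteq R_x$, and $Z$ could well be non‑reduced or non‑linear.

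Finally, linearity. It remains to deduce that $Z$ is a linear subvariety of $\PN$ (then necessarily a $\PP^k$ inside $L$). The plan is to show that for general $x\in Z$ and every $v\in R_x=T_xZ$ the line $\lin{x,x+v}$ lies in $Z$: the union of these lines is the linear space $T_xZ$, of dimension $\dim Z$, so it must coincide with the component of $Z$ through $x$, and then, by a standard genericity argument, with $Z$ itself. The content of this step is that the first‑order information ``$\mathrm{II}_x(v,-)$ takes values in $W$'' must be promoted to the genuinely second‑order and then global statements that $v$ is an asymptotic direction of $X$ (so that $\lin{x,x+v}\subseteq X$) and that the moving tangent $m$‑plane stays equal to $L$ all along $\lin{x,x+v}$ (so that the line lies in the fibre $\gamma_m^{-1}(L)=Z$). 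This is exactly the place where one must use, globally along the line, that a general fibre is reduced — equivalently, the separability hypothesis — and it is what the Monge–Segre–Wallace criterion \cite[(2.4)]{HK} supplies for $m=N-1$ and what the first author's argument \cite{expshr} supplies for $m=n$. I expect this propagation of tangency along the line to be the main obstacle for arbitrary $m$; one handles it by running the same mechanism in general, or, failing that, by reducing $m$ — for instance by intersecting $X$ with a general hyperplane through $\lin{Z}$, or by passing to a suitable associated variety of $X$ — in such a way that the contact locus $Z$ and the separability of the Gauss map in play are preserved until one of the two known cases applies.
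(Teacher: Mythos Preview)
Your tangent-space computation $T_xZ=R_x$ is correct and is, in sheaf form, exactly what the paper does with the map $\varphi$ built from the second fundamental form. The gap is precisely where you flag it: the passage from $T_xZ=R_x$ pointwise to $Z$ being linear. Neither of your proposed fixes is viable as stated. ``Running the same mechanism in general'' is the problem itself, not a solution to it. And the reduction by cutting $X$ with a hyperplane through $\langle Z\rangle$ changes $n$, the Gauss map, and the incidence variety $P_m^X$ all at once; there is no reason the contact locus for the slice coincides with $Z$, nor that the relevant Gauss map of the slice stays separable. (The paper does compare $\gamma_m$ and $\gamma_{m-1}$ via a flag variety, but in the opposite direction: it deduces separability of $\gamma_{m-1}$ \emph{from} the linearity result for $\gamma_m$, not the other way around.)

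The paper sidesteps the integration-along-lines problem entirely. It packages the subspaces $R_x$ into a \emph{shrinking map} $\sigma=\sigma_{P_m^X,\gamma_m}\colon P_m^X\dashrightarrow\Gr(m^-,\PN)$, sending $(L,x)$ to a linear space $M_{(L,x)}\subset\PN$. Two observations then finish the proof without ever moving along $Z$. First, separability of $\gamma_m$ forces $\sigma$ to factor through $\gamma_m$, so $M_{(L,x)}=:M_L$ depends only on $L$. Second, the \emph{symmetry} of the second fundamental form (your identity $\mathrm{II}_x(v,w)=\mathrm{II}_x(w,v)$, rewritten in the paper as $\varphi_1=\varphi_2$) identifies $\sigma$ with the $m$-th \emph{degeneracy map} $\kappa_m$, which by construction satisfies $x\in\kappa_m(L,x)$ and $\dim\kappa_m(L,x)=\dim\gamma_m^{-1}(L)$. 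Hence every point $x$ of the fibre over $L$ lies in the single linear space $M_L$, and a dimension count gives $\gamma_m^{-1}(L)=\{L\}\times M_L$. The missing idea in your sketch is this global reinterpretation: instead of checking that individual lines stay in $Z$, one produces a priori a linear space $M_L$---constant along the fibre by separability---and uses the symmetry of $\mathrm{II}$ to show it has the right dimension and swallows the whole fibre.
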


If $\gamma_m$ is separable, then
so is $\gamma_{m'}$ for all $m'$ with $n \leq m' \leq m$;
this can be shown in the same way for the reflexive case.
See \autoref{thm:m-1-th-sep}.
On the other hand, for $m'' > m$,
$\gamma_{m''}$ can be inseparable even if $\gamma_m$ is separable;
H.~Kaji \cite{Kaji2003} and S.~Fukasawa \cite{Fukasawa2006-3} \cite{Fukasawa2007}
gave such examples for the case $m'' = N-1$ and $m = n$.
\\

F.~L.~Zak showed that
a contact locus of any $L \in \gamma_m(\pi^{-1}(X^{sm}))$ is of dimension
$\leq m-n+\dim(\Sing(X))+1$
\cite[I. \textsection{}2, Theorem 2.3(a)]{Zak}.
Hence, if $X$ is smooth, then
\emph{any} $m$-th contact locus is of dimension $\leq m-n$.
Considering the case of $m = n$
and combining with the linearity (in particular, irreducibility)
of a general fiber of the $n$-th Gauss map $\gamma_n$,
it holds that $\gamma_n$ is in fact birational if it is separable
(e.g., the characteristic $p = 0$).

In the case $m=n+1$,
any $(n+1)$-th contact locus is of dimension $\leq 1$
if $X$ is smooth.
We study when the equality holds,
and obtain the following theorem.

\begin{thm} \label{thm:gamma_n+1-sep-bir-unlessP1Pn-1}
  Let $X \subset \PN$ be a smooth non-degenerate projective variety in characteristic $p \geq 0$
  such that $n = \dim(X) < N-2$ and $\gamma_{n+1}$ is separable.
  Then $\gamma_{n+1}$ is in fact birational, unless
  $X$ is the Segre embedding $\P^1 \times \P^{n-1} \subset \PP^{2n-1}$.
\end{thm}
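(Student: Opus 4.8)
The plan is to argue by contradiction: suppose $\gamma_{n+1}$ is separable but not birational, so that a general $(n+1)$-th contact locus $C \subset X$ has $\dim C = 1$. By \autoref{thm:m-th-linearity} (applied with $m = n+1$), $C$ is a line $\ell \cong \PP^1$ in $\PN$, and since $X$ is smooth, $\gamma_{n+1}$ is generically finite onto $X_{n+1}^*$ of the expected dimension. First I would set up the universal family of these contact lines: over a dense open $U \subset X_{n+1}^*$, the fibers of $\pi$ restricted to $\gamma_{n+1}^{-1}(U)$ sweep out a $\PP^1$-bundle $\mathcal{C} \to U$ with a finite dominant map $\mathcal{C} \to X$. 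Because $C = \ell$ is a contact locus, for \emph{every} point $x \in \ell$ the tangent space $\TT_x X$ lies in the fixed $(n+1)$-plane $L$; this means the Gauss map $\gamma_n$ of $X$ is constant along (the general member of) this family of lines, i.e. these lines are contracted by $\gamma_n$. But $\gamma_n$ is birational for smooth $X$ (as noted in the excerpt, combining Zak's bound with linearity of the general fiber), so $\gamma_n$ is an isomorphism over its image and cannot contract a curve — \emph{unless} the general contact line $\ell$ does not dominate $X$, i.e. the lines through a general point of $X$ form a positive-dimensional family lying in contact planes.

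So the real content is: a general point $x \in X$ lies on a positive-dimensional family of lines $\ell \subset X$ such that $\TT_y X$ is constant for $y \in \ell$. I would next analyze this family of lines through a general point. Let $\Sigma_x \subset \PP(\TT_x X)$ be the variety of tangent directions at $x$ of such contact lines; for each such line $\ell$, restricting the second fundamental form of $X$ at $x$ to $\ell$ kills it (the tangent space is constant to first order along $\ell$), so the direction $[v] \in \Sigma_x$ lies in the singular locus / kernel directions of $\mathrm{II}_{X,x}$ suitably interpreted. The dual-defect interpretation enters here: the condition ``$\gamma_{n+1}$ not birational'' forces $X$ to have positive dual defect $\delta = \dim X - \dim X^* + \codim X - 1 \geq$ something, and one knows (Ein's theory in characteristic zero, and its separable analogue which is what we are allowed to invoke implicitly via the contact-locus linearity already established) that the contact loci of the dual variety are linear. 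The key numerical step is to compare: the general $(n+1)$-th contact locus has dimension exactly $1$, which via Zak-type dimension counts pins down the dual defect and the dimension of $X_{n+1}^*$ very tightly.

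The decisive step — and the main obstacle — is the classification: I would show that the combination (smooth $X$, $n < N-2$, general contact line through a general point, $\gamma_{n+1}$ separable) forces $X$ to be covered by lines with constant Gauss image, hence $X$ is a scroll-like variety, and then a Grassmann/incidence dimension count shows the only possibility consistent with $n < N - 2$ and with the contact locus being exactly $1$-dimensional (not larger) is the Segre $\PP^1 \times \PP^{n-1} \subset \PP^{2n-1}$. Concretely: the family of contact lines makes $X$ a variety swept by a $1$-parameter (at least) family of lines along each of which $\gamma_n$ is constant; writing $X$ as the image of a $\PP^1$-bundle $\PP(\mathcal{E}) \to B$ over some base $B$ and computing $\dim X^*_{n+1}$, one gets that $B$ must itself be a linear space $\PP^{n-1}$ and $\mathcal{E} = \sO \oplus \sO(1)$ (up to twist), which is exactly the Segre. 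I expect the hard part to be ruling out the degenerate and higher-secant-defect cases cleanly in arbitrary characteristic without the full strength of Ein's char-$0$ arguments — i.e. making the step ``constant Gauss image along a covering family of lines $\Rightarrow$ Segre $\PP^1 \times \PP^{n-1}$'' work using only separability of $\gamma_{n+1}$ and the already-proved linearity theorem, rather than reflexivity. I would handle this by a careful case split on $\dim X_{n+1}^*$ and on whether the contact lines through a general point form a $1$-dimensional or higher-dimensional family, using Zak's dimension bound $\dim(\text{contact locus}) \leq m - n + \dim \Sing(X) + 1$ at each relevant stratum to close off all branches but one.
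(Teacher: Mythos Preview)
Your argument has a genuine error at its core. You claim that because every $x$ on a contact line $\ell = \sigma_{n+1}^L$ satisfies $\TT_xX \subset L$, the Gauss map $\gamma_n$ is \emph{constant} along $\ell$. This is false: the tangent $n$-planes $\TT_xX$ all lie inside the fixed $(n+1)$-plane $L$, but they move within it. In fact, since $X$ is smooth, Zak's theorem forces every fiber of $\gamma_n$ to be finite, so $\gamma_n$ cannot be constant along any curve in $X$. (The paper uses exactly this finiteness of $\gamma_n$ in \autoref{thm:sigma-m+A-in-Lambda} to show that two different tangent $(n+1)$-planes at $x$ give \emph{distinct} contact lines.) Once this step fails, your downstream picture --- lines with ``constant Gauss image'' forcing a scroll structure --- collapses, and the vague references to dual defect, the second fundamental form, and ``Grassmann/incidence dimension counts'' do not supply a replacement mechanism.

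What the paper actually does is quite different, and you are missing two key ideas. First, rather than studying a single contact line, one fixes a general $x\in X$ and takes the \emph{union} $\Lambda_x$ of \emph{all} contact lines $\sigma_{n+1}^L$ as $L$ ranges over tangent $(n+1)$-planes at $x$. Using the shrinking map and a careful incidence argument involving also $\sigma_{n+2}^M$ for tangent $(n+2)$-planes $M$, one shows that $\Lambda_x$ is a linear $(N-n)$-plane in $\PN$ and that $\Lambda_x = \Lambda_{x'}$ for general $x'\in\Lambda_x$; this gives $X$ the structure of a $\PP^{N-n}$-bundle over a base $Y$ of dimension $2n-N$ (not a $\PP^1$-bundle as you suggest). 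Second --- and this is the decisive step you have no analogue of --- the constraint $N=2n-1$ is obtained from a Picard-rank argument: the bundle structure forces $\rk\Pic(X)\geq 2$, while Barth--Larsen (characteristic zero) or Speiser (positive characteristic) gives $\rk\Pic(X)=1$ whenever $n\geq (N+2)/2$. This forces $\dim Y = 1$ and $N=2n-1$, after which a classical argument identifies $X$ with the Segre embedding. No amount of ``case splitting on $\dim X_{n+1}^*$'' will substitute for this Lefschetz-type input.
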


In characteristic $p = 0$,
since the case of $n = N-2$ follows from \cite[Theorem~3.4]{Ein},
the statement holds for all $n \leq N-2$,
and we have that $\P^1 \times \P^{n-1} \subset \PP^{2n-1}$
is only the smooth projective variety which satisfies
the equality for Zak's inequality on dimensions of $m$-th contact loci
for any $n \leq m < N$.
See \autoref{thm:delta_m=m-n} and \autoref{thm:delta_m=m-n_rem} for detail.

Note that if $\gamma_{n+1}$ is separable and generically finite,
then it is birational
because of
\autoref{thm:m-th-linearity}.
Hence the main point of the proof of \autoref{thm:gamma_n+1-sep-bir-unlessP1Pn-1}
is to study the case when $\gamma_{n+1}$ is not generically finite.
\\

A background of our study is the classification of varieties with small dual varieties by L.~Ein \cite{Ein};
in characteristic $p=0$, he showed that if a smooth projective variety $X \subset \PN$ satisfies
$\dim (X) = \dim(X^*) \leq \frac{2}{3}N$,
then $X$ is one of the followings;
(a) a hypersurface, (b) $\PP^1 \times \PP^{n-1} \subset \PP^{2n-1}$,
(c) $\Gr(1, \PP^4) \subset \PP^{9}$, (d) the $10$-dimensional spinor variety in $\PP^{15}$.
We note that the Ein's classification is completed
if \emph{Hartshorne's conjecture} holds,
that is to say, any smooth projective variety $X \subset \PN$ is a complete intersection if $\dim (X) > \frac{2}{3}N$.

If $\gamma_{n+1}$ is not generically finite, then we have $\dim(X) = \dim(X^*)$ (see \autoref{thm:eq-Zaks-formula}).
On the other hand, if $X$ is (c) or (d), then the finiteness of $\gamma_{n+1}$ can be shown by explicit calculations.
Hence 
we find that the statement of \autoref{thm:gamma_n+1-sep-bir-unlessP1Pn-1}
follows from Ein's result in characteristic $p = 0$ if $\dim(X) \leq \frac{2}{3}N$ (or Hartshorne's conjecture holds).
Our purpose is to show this statement in any characteristic $p \geq 0$
without the condition of the dimension of $X$.
\\

This paper is organized as follows.
In \textsection{}2, we generalize the techniques of 
\cite{expshr} and \cite{FI} in order to study $m$-th Gauss maps in arbitrary characteristic.
In particular, we define and describe the \emph{$m$-th degeneracy map}
and prove \autoref{thm:m-th-linearity}.
In \textsection{}3, we show basic properties of $m$-th contact loci and their dimensions, i.e., $m$-th defects.
In \textsection{}4, we study the $(n+1)$-th Gauss map $\gamma_{n+1}$
in the case when $X$ is smooth and $\gamma_{n+1}$ is not generically finite. We fix a general point $x \in X$ and consider the union of contact loci of $(n+1)$-planes $L$'s such that $L$ is tangent to $X$ at $x$.
Showing that the union is in fact an $(n-1)$-plane,
we have that $X$ is the Segre embedding $\PP^1 \times \PP^{n-1}$,
yielding
\autoref{thm:gamma_n+1-sep-bir-unlessP1Pn-1}.

\subsection*{Acknowledgments}
The authors would like to thank
Professors Satoru Fukasawa and Hajime Kaji for their valuable comments and advice.
The first author was supported by the Grant-in-Aid for JSPS fellows, No.\ 16J00404.
The second author was supported by the Grant-in-Aid for JSPS fellows, No.\ 26--1881.

\section{Linearity of general $m$-th contact loci}

In \cite[Definition 2.1]{FI},
we extended the notion
of the shrinking map, which had been
independently introduced by
J.~M.~Landsberg and J.~Piontkowski
(see \cite[2.4.7]{FP} and \cite[Theorem~3.4.8]{IL}).
Here we recall the definition of the map,
which is a main ingredient of this section.

\begin{defn}\label{thm:def-shr-Zf}
  We denote by $\sS = \sS_{m}$ and $\sQ = \sQ_{m}$ the universal subbundle of rank $N-m$ and the universal quotient bundle of rank $m+1$ over $\Gm$ respectively, 
  with the exact sequence
  \begin{equation}\label{eq:Euler0}
    0 \rightarrow \sS \rightarrow H^0(\PN, \sO(1))\otimes \sO_{\Gm} \rightarrow \sQ \rightarrow 0.
  \end{equation}

  For a rational map $f: Z \dashrightarrow \Gr(m, \PN)$,
  we define the \emph{shrinking map of $Z$ with respect to $f$}
  \[
  \sigma = \sigma_{Z, f} : Z \dashrightarrow \Gr(m^{-}, \PN)
  \]
  for some integer $m^{-} = m^{-}_{\sigma} \leq m$ as follows.
  Let $\Zo$ be an open subset consisting of smooth points of $Z$ and regard $f$ as $f|_{\Zo}$.
  We have the following composite homomorphism
  \begin{equation*}    \Phi = \Phi_f: f^*\sQ\spcheck \rightarrow f^*\sHom(\sHom(\sQ\spcheck , \sS\spcheck), \sS\spcheck)
    \rightarrow \sHom(T_{\Zo}, f^*\sS\spcheck),
  \end{equation*}
  where
  the first homomorphism
  is induced from the dual of
  $\sS \otimes \sS\spcheck \rightarrow \sO$,
  and the second one
  is induced from the differential
  $df : T_{\Zo} \rightarrow f^*T_{\GN} = f^*\sHom(\sQ\spcheck , \sS\spcheck)$.
  We define an integer $m^{-}$
  with $-1 \leq m^- \leq m$ by
  \begin{equation*}    m^{-} = \dim (\ker \Phi \otimes k(z)) - 1
  \end{equation*}
  for a general point $z \in Z$.
  Since $\ker \Phi|_{\Zo}$ is a subbundle of
  $H^0(\PN, \sO(1))\spcheck \otimes \sO_{\Zo}$
  of rank $m^{-}+1$
  (replacing $\Zo \subset Z$ by a smaller open subset if necessary),
  we have an induced morphism
  $\sigma: \Zo \rightarrow  \Gr(m^{-}, \PN)$
  and call it the \emph{shrinking map of $Z$ with respect to $f$}.
\end{defn}

The purpose of this section is to
prove the following result,
where
we write
\[
\calu_{\Gm} := \set*{(L, x) \in \Gm \times \PN}{x \in L},
\]
the universal family of $\Gm$.

\begin{thm}\label{cor_sep}
  Let $X \subset \PN$ be an $n$-dimensional projective variety,
  and let $\iota: Y \hookrightarrow \G(m,\PN)$ be a projective variety with $n \leq m \leq N-1$.
  We   set $\sigma_Y = \sigma_{Y, \iota}: Y \dashrightarrow {\Gr(m^{-},\PN)}$ to be the shrinking map of $Y$ with respect to $\iota$ , where $m^{-} = m^{-}_{\sigma_Y} \leq m$.
  Then the following are equivalent:

  \begin{enumerate}
  \item
    $\gamma_m: P_m^{X} \arw \G(m,\P^N)$ is separable,
    and $Y =\gamma_m( P_m^{X})$.

  \item $P_m^{X} = \sigma_Y^*{\calu}_{\Gr(m^{-},\PN)}$ in $\G(m,\P^N) \times \PN$.

  \item $\dim \sigma_Y^*{\calu}_{\Gr(m^{-},\PN)} =n+(m-n)(N-m)$ and the second projection $\sigma_Y^*{\calu}_{\Gr(m^{-},\PN)} \rightarrow \PN$
    is separable,
    and the image is equal to $X$.
  \end{enumerate}
\end{thm}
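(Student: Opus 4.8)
The plan is to establish the cycle of implications $(1)\Rightarrow(2)\Rightarrow(3)\Rightarrow(1)$, using the shrinking map $\sigma_Y$ as the bridge between the Gauss incidence $P_m^X$ and its description via a smaller Grassmannian. The conceptual heart is the classical fact that, on a smooth point of the source, the differential of $\gamma_m$ controls exactly how much a tangent $m$-plane can be ``shrunk'': a general fiber of $\gamma_m$, after passing to the contact locus, spans a linear space of dimension $m^-$, and the shrinking map records this $m^-$-plane. So the first thing I would do is make precise the relationship between $\gamma_m$ and $\sigma_Y$: on a general $(L,x)\in P_m^X$, the tangent space to the fiber $\gamma_m^{-1}(L)$ (or rather to $P_m^X$ along the vertical directions) is computed by the kernel of $\Phi_\iota$ pulled back along $\gamma_m$, and by \autoref{thm:m-th-linearity} (which we are allowed to assume) this fiber, hence the contact locus, is linear once separability holds. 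The span of that linear contact locus is precisely $\sigma_Y(L)\in\Gr(m^-,\PN)$, so that $x\in\sigma_Y(L)\subset L$.

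For $(1)\Rightarrow(2)$: assuming $\gamma_m$ separable and $Y=\gamma_m(P_m^X)$, I would argue both inclusions of $P_m^X\subset\Gm\times\PN$ with $\sigma_Y^*\calu_{\Gr(m^-,\PN)}$. For ``$\subseteq$'': a general point of $P_m^X$ is $(L,x)$ with $x$ in the general contact locus of $L$; by \autoref{thm:m-th-linearity} this contact locus is linear, and the definition of the shrinking map identifies its span with $\sigma_Y(L)$, giving $x\in\sigma_Y(L)$, i.e. $(L,x)\in\sigma_Y^*\calu$; taking closures gives the inclusion of varieties. For ``$\supseteq$'' (or equality) I would do a dimension count: both sides are irreducible (the right side being the pullback of the universal $\PP^{m^-}$-bundle over (an open subset of) the irreducible $Y$), and $\dim P_m^X = n + (m-n)(N-m)$ by the standard incidence-variety dimension formula for $P_m^X$ (the fiber of $\pi$ over a smooth point $x$ is $\{L : \TT_xX\subseteq L\}\cong\Gr(m-n-1,\PN-n-1)$, of dimension $(m-n)(N-m)$); on the other side, $\dim Y = \dim P_m^X - (\text{general fiber dimension of }\gamma_m)$ and the general fiber of the projection $\sigma_Y^*\calu\to\PN$, being the contact locus, has dimension $m^-$ — chasing these equalities, together with the separability of $\gamma_m$ ensuring the generic fiber dimension is the expected one, forces the two irreducible varieties of equal dimension and one containing the other to coincide.

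For $(2)\Rightarrow(3)$: granting $P_m^X=\sigma_Y^*\calu_{\Gr(m^-,\PN)}$, the dimension statement in (3) is just $\dim P_m^X=n+(m-n)(N-m)$, which I noted above. That the second projection $\sigma_Y^*\calu\to\PN$ has image $X$ is immediate since it equals $\pi:P_m^X\to X$, which is dominant (every smooth $x\in X$ lies on some tangent $m$-plane). Separability of this projection: its general fiber over $x$ is $\{L\in Y : \TT_xX\subseteq L\}$, again a Grassmannian of the expected dimension, and one checks the differential is generically surjective — this is where I would again invoke the structure of $\sigma_Y^*\calu$ as a $\PP^{m^-}$-bundle and reduce to a tangent-space computation, or alternatively deduce separability of $\pi$ from separability of $\gamma_m$ plus the product structure. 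For $(3)\Rightarrow(1)$: reversing the dimension bookkeeping, the hypothesis pins down $\dim\sigma_Y^*\calu$, hence $\dim Y$ via the fiber dimension of the separable projection to $\PN$; identifying $\sigma_Y^*\calu$ with $P_m^X$ (its points are pairs $(L,x)$ with $x\in\sigma_Y(L)\subseteq L$ and, using that the projection to $\PN$ is onto $X$ with the right fiber dimension, $L$ is forced to contain $\TT_xX$), one recovers $Y=\gamma_m(P_m^X)$ and then transfers separability of the $\PN$-projection back to separability of $\gamma_m$.

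I expect the main obstacle to be the careful handling of separability when passing between the two projections — i.e., showing that separability of $\gamma_m:P_m^X\to\Gm$ is genuinely equivalent to separability of the second projection $\sigma_Y^*\calu\to\PN$, rather than merely transporting dimension data. This requires working with the differentials of both maps on a common open set and verifying that the kernel computation defining $m^-$ (via $\ker\Phi_\iota$) is compatible with the generic smoothness of the relevant morphism; in positive characteristic one must resist the temptation to use generic smoothness of $\pi$ for free, and instead extract it from the hypotheses. The linearity input from \autoref{thm:m-th-linearity} is what makes ``contact locus $=$ its span'' legitimate and thereby lets the shrinking map faithfully encode the geometry; without it the equivalence would fail, exactly as in the inseparable examples cited in the introduction.
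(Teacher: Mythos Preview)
Your proposal has a genuine circularity problem. You invoke \autoref{thm:m-th-linearity} (linearity of general contact loci) as an input to prove \autoref{cor_sep}, writing that you are ``allowed to assume'' it. But in the paper the logical flow is the reverse: \autoref{thm:m-th-linearity} is \emph{deduced} from the implication $(1)\Rightarrow(2)$ of \autoref{cor_sep} (see the short proof immediately following the statement of \autoref{cor_sep}). So using linearity to establish the inclusion $P_m^X\subset\sigma_Y^*\calu_{\Gr(m^-,\PN)}$ is circular. Relatedly, your assertion that ``the definition of the shrinking map identifies [the] span [of the contact locus] with $\sigma_Y(L)$'' is not a definition-unpacking at all: $\sigma_Y$ is defined purely from the differential of the inclusion $Y\hookrightarrow\Gm$ and makes no reference to $X$ or its contact loci. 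That $\sigma_Y(L)$ coincides with the contact locus is exactly the content of \autoref{thm:sigma_m-eq-cloc}, which is a \emph{corollary} of \autoref{cor_sep}.

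What the paper actually does to get $(1)\Rightarrow(2)$ without circularity is to introduce two auxiliary results. First, \autoref{thm_chara_Pmx} gives an intrinsic criterion (in terms of $\Phi_{\pr_1}$ and the shrinking map of the source) for when a subvariety $X'\subset\calu_{\Gm}$ lies in $P_m^X$; this supplies the inclusion $P_m^X\subset\sigma_Y^*\calu$ once one knows the shrinking map of $P_m^X$ with respect to $\gamma_m$ factors through $\sigma_Y$. Second, \autoref{prop:m-th_kap-defby-shr} identifies the shrinking map $\sigma_{P_m^X,\gamma_m}$ with the $m$-th \emph{degeneracy map} $\kappa_m$, via the symmetry of the second fundamental form; separability of $\gamma_m$ then forces $\sigma_{P_m^X,\gamma_m}=\sigma_Y\circ\gamma_m$ by a functoriality property of shrinking maps (\cite[Remark~2.3]{FI}), yielding both the inclusion and the dimension equality $m^-=\dim P_m^X-\dim Y$. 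Your plan is missing this machinery entirely, and the same gap reappears in your sketch of $(3)\Rightarrow(1)$: the claim that for $(L,x)\in\sigma_Y^*\calu$ the plane $L$ is ``forced to contain $\TT_xX$'' is again precisely \autoref{thm_chara_Pmx}, not a consequence of the dimension/separability hypotheses alone.
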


Once the above theorem is shown,
\autoref{thm:m-th-linearity} follows from the implication ``(1) $\Rightarrow$ (2)'' as follows.
\begin{proof}[Proof of \autoref{thm:m-th-linearity}]
  Take a general $m$-plane $[L] \in X_m^* = \gamma_m(P_m^X) = Y$
  and set $M \subset \PN$ to be the $m^{-}$-plane corresponding to $\sigma_{Y}(L) \in \Gr(m^{-},\PN)$.
  By \autoref{cor_sep} (2),
  we have $ \gamma_m^{-1}(L) =  \set{[L]} \times M$.
  Hence it holds that $\pi(\gamma_m^{-1}(L)) = M$.
\end{proof}

Note that we also have a characterization of images of separable $m$-th Gauss maps by the equivalence ``(1) $ \Leftrightarrow $ (3)'' (see \cite[Corollary 3.15]{expshr}).

\begin{rem}\label{thm:m-1-th-sep}
  If $\gamma_m$ is separable, then
  so is $\gamma_{m'}$ for all $m'$ with $n \leq m' \leq m$.
  This is shown as follows, which is the same way
  for the reflexive case by Kleiman and Piene \cite[pp.~108--109]{KP}.
  It is sufficient to show the case $m' = m-1$.
  Consider the Flag variety
  \[
  \F(m-1,m;\PN) = \set*{(M',M) \in \Gr(m-1, \PN) \times \Gr(m, \PN)}{M' \subset M},
  \]
  and define $P_{m-1,m}^X$ to be the closure of the set of
  $(M',M, x) \in \F(m-1,m;\PN) \times X^{sm}$
  such that $\TT_xX \subset M' \subset M$.
  We have the following commutative diagram:
  \[
  \xymatrix{    & P_m^X \ar[r]^{\kern-3em\gamma_m} \ar[ld]_{\pi}& \Gr(m, \PN)
    \\
    X &P_{m-1,m}^X \ar[r]^{\kern-3em\gamma_{m-1,m}} \ar[u] \ar[d] \ar[l]_{\pi}& \F(m-1,m;\PN) \ar[u] \ar[d]
    \\
    & P_{m-1}^X \ar[r]^{\kern-3em\gamma_{m-1}} \ar[lu]_{\pi}& \Gr(m-1, \PN),
  }  \]
  where $\pi(\gamma_{m-1,m}^{-1}(M',M)) = \pi(\gamma_{m-1}^{-1}(M'))$
  in $X$ for $(M',M) \in \F(m-1,m;\PN)$.
  By the diagram,
  $\pi(\gamma_{m-1}^{-1}(M')) \subset X$ is scheme-theoretically equal to
  the intersection of $\pi(\gamma_{m}^{-1}(M))$'s with
  general $m$-planes $M$ containing $M'$.
  If $\gamma_m$ is separable, then \autoref{thm:m-th-linearity} implies that $\pi(\gamma_{m}^{-1}(M))$ is a linear variety, and hence so is $\pi(\gamma_{m-1}^{-1}(M'))$.
  In particular, $\pi(\gamma_{m-1}^{-1}(M')) \simeq \gamma_{m-1}^{-1}(M')$ is reduced, that is to say $\gamma_{m-1}$ is separable.
\end{rem}

The case of $m=n \ (= \dim(X))$ in \autoref{cor_sep} is nothing but
\cite[Theorem~3.1]{expshr}, \cite[Corollary~5.4]{FI}.
In order to prove \autoref{cor_sep} for any $m \geq n$,
we generalize several techniques of \cite{FI}
in the following subsections.

\subsection{Subvarieties of the universal family of a Grassmann variety}

We set the bundles $\sS = \sS_{m}$ and $\sQ = \sQ_{m}$ on $\Gm$
as in \autoref{thm:def-shr-Zf}.
Set $\calu = \calu_{\Gm}$ to be
the universal family of $\Gm$.
Then $P_m^X \subset \calu$ holds for a projective variety $X \subset \PN$.
We denote by $\sO_{\sU}(1)$ the tautological invertible sheaf on $\sU = \cP(\sQ\spcheck)$.

First we give equivalent conditions when a subvariety of $\calu$
is contained in $P_m^X$ for some $X$.

\begin{prop}\label{thm_chara_Pmx}
  Let $X' \subset \calu \subset \G(m,\P^N) \times \P^N$ be a projective variety and let $\pr_i$ be the projection from $X'$ to the $i$-th factor for $i=1,2$.
  For simplicity,
  we also denote by $\pr_i$ the restricted morphism $\pr_i|_{\Xdo}$
  for a non-empty open subset $\Xdo \subset {X'}^{sm}$.
  Let $\Phi = \Phi_{\pr_1}: \pr_1^*\sQ\spcheck \rightarrow \sHom(T_{\Xdo}, \pr_1^*\sS\spcheck)$ and $\sigma = \sigma_{X', \pr_1}: X' \dashrightarrow \Gr(m^{-}, \PN)$ be as in \autoref{thm:def-shr-Zf}.
  
  Assume that $\pr_2 : X' \arw \P^N$ is separable.   Then the following conditions are equivalent.
  \begin{enumerate}[\normalfont(i)]
  \item\label{thm:GaussImage:graph}  $X' \subset P_m^{X}$ for $X=\pr_2(X')$.   \item\label{thm:GaussImage:zero} The composition of 
    \[
    \sO_{\calu}(-1) |_{{X'}\spcirc} \hookrightarrow \pr_1^* \sQ^{\vee}
    \ \ \text{and}\ \ \,
    \Phi : \pr_1^* \sQ^{\vee} \arw \sHom(T_{{X'}\spcirc}, \pr_1^* \sS^{\vee})
    \]
    is the zero map on a non-empty open subset ${X'}\spcirc \subset X'$.
  \item\label{thm:GaussImage:shr} The image of $(\sigma, \pr_2) : X' \dashrightarrow \Gr(m^{-}, \PN) \times \PN$
    is contained in 
    the universal family $\sU_{\Gr(m^{-}, \PN)} \subset \Gr(m^{-}, \PN) \times \PN$.
  \end{enumerate}
\end{prop}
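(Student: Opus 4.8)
The plan is to prove the equivalences in the order (i) $\Leftrightarrow$ (ii) $\Leftrightarrow$ (iii), with (ii) as the bridge between the two other conditions. The heart of the argument is an explicit local description of the composite in (ii). Over a small open $U$ inside the smooth locus of $X'$ I would fix a generating section $\tilde x\colon U\to H^0(\PN,\sO(1))\spcheck\setminus\{0\}$ of $\sO_{\calu}(-1)$ — a local lift of $\pr_2$ — and a local frame $\tilde s_1,\dots,\tilde s_{N-m}$ of $\pr_1^*\sS$, whose values are linear forms vanishing on the moving $m$-plane. The relation defining $\calu$, hence valid along $X'\supset U$, is the incidence $\langle\tilde s_j,\tilde x\rangle\equiv 0$; differentiating it along $v\in T_zX'$ gives the identity
\[
\langle d\tilde s_j(v),\tilde x(z)\rangle=-\langle\tilde s_j(z),d\tilde x(v)\rangle .
\]
Next I would unwind the two arrows defining $\Phi$ in \autoref{thm:def-shr-Zf} — the first, $\pr_1^*\sQ\spcheck\to\pr_1^*\sHom(\sHom(\sQ\spcheck,\sS\spcheck),\sS\spcheck)$, being evaluation, and the second being induced by $d\pr_1$ under $T_{\Gm}=\sHom(\sQ\spcheck,\sS\spcheck)$ — and check that the composite $\sO_{\calu}(-1)|_{\Xdo}\hookrightarrow\pr_1^*\sQ\spcheck\xrightarrow{\Phi}\sHom(T_{\Xdo},\pr_1^*\sS\spcheck)$ sends $v\in T_zX'$, paired against $\tilde s_j$, to $\langle d\tilde s_j(v),\tilde x(z)\rangle$, hence by the identity above to $-\langle\tilde s_j(z),d\tilde x(v)\rangle$. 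Therefore (ii) amounts to: for general $z$, every local section of $\pr_1^*\sS$ annihilates $\lin{\tilde x(z)}+\lin{d\tilde x(v):v\in T_zX'}\subseteq H^0(\PN,\sO(1))\spcheck$. Pinning down this computation — keeping the conventions of the two tautological sequences straight and using the incidence relation to convert a derivative of a linear form, evaluated at $x$, into the linear form, evaluated at a tangent direction — is the one step that needs genuine care.

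For (i) $\Leftrightarrow$ (ii) I would then invoke the hypothesis that $\pr_2$ is separable. By generic smoothness of a separable dominant morphism, for general $z\in X'$ the point $x=\pr_2(z)$ is a smooth point of $X=\pr_2(X')$ and the differential $(d\pr_2)_z\colon T_zX'\to T_xX$ is surjective; therefore $\lin{\tilde x(z)}+\lin{d\tilde x(v):v\in T_zX'}$ is exactly the affine cone over $\TT_xX$ inside $H^0(\PN,\sO(1))\spcheck$. Combined with the previous step, (ii) now reads: for general $z$, every linear form vanishing on $L=\pr_1(z)$ vanishes on $\TT_xX$, that is, $\TT_xX\subseteq L$, that is, $z\in P_m^X$. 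Since $X'$ is irreducible, $P_m^X$ is closed, and $X^{sm}$ is dense in $X$, this holds at a general point of $X'$ if and only if $X'\subseteq P_m^X$, which is (i). Separability is used only at this point — precisely to recover $\TT_xX$ from the differential of $\pr_2$ — and plays no role in the equivalence of (ii) and (iii).

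Finally, (ii) $\Leftrightarrow$ (iii) is formal from \autoref{thm:def-shr-Zf}: there $\ker\Phi$ is a subbundle of $H^0(\PN,\sO(1))\spcheck\otimes\sO_{\Xdo}$ of rank $m^{-}+1$, so at a general $z$ its fibre is $\ker(\Phi_z)$ and $\sigma(z)=\cP(\ker\Phi_z)$ is an $m^{-}$-plane in $\PN$. Since $x\in L$ forces $\lin{\tilde x(z)}\subseteq\pr_1^*\sQ\spcheck\otimes k(z)$, the inclusion $\sO_{\calu}(-1)\hookrightarrow\ker\Phi$ over a dense open subset — which is exactly the vanishing asserted in (ii) — is equivalent to the condition that $x\in\cP(\ker\Phi_z)=\sigma(z)$ for general $z$, hence to the image of $(\sigma,\pr_2)$ being contained in $\sU_{\Gr(m^{-},\PN)}$, which is (iii). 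Putting the three parts together proves the proposition. I expect essentially all of the difficulty to be concentrated in the explicit local description of $\Phi$ on $\sO_{\calu}(-1)$ carried out in the first step; once that identity is available, (i) $\Leftrightarrow$ (ii) is a routine application of generic smoothness and (ii) $\Leftrightarrow$ (iii) is bookkeeping with the definition of the shrinking map.
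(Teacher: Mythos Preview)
Your proposal is correct and follows essentially the same route as the paper: prove (i)$\Leftrightarrow$(ii) using separability of $\pr_2$ to identify the span of $\tilde x$ and its derivatives with the cone over $\TT_xX$, and (ii)$\Leftrightarrow$(iii) formally from the definition of the shrinking map. The only difference is presentational: the paper phrases the (i)$\Leftrightarrow$(ii) step sheaf-theoretically via the natural surjection $\overline{\pr}_2^*T_{\PN}(-1)\to\overline{\pr}_1^*\sS\spcheck$ on $\calu$ and then invokes \cite[Theorem~2.4]{FI} for the identification of the two composites and for (ii)$\Leftrightarrow$(iii), whereas you unpack that reference by carrying out the local computation (differentiating the incidence relation $\langle\tilde s_j,\tilde x\rangle=0$) directly.
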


\begin{proof}
  
  On $\calu$,
  there exists a natural surjection
  $\ep : \overline{\pr}_2^* T_{\P^N}(-1) \arw \overline{\pr}_1^* \sS^{\vee}$,
  where $\overline{\pr}_i$ is the projection from $\calu \subset \Gm \times \PN$ to the $i$-th factor
  (hence $\pr_i = \overline{\pr}_i|_{X'}$).
  We note that $\sO_{\sU}(1) = \overline{\pr}_2^*\sO_{\PN}(1)$ holds.

  Set $X=\pr_2(X')$.
  Since $\pmx$ is the incidence variety,
  $X'$ is contained in $\pmx$ if and only if 
  \[
  \pr_2^* T_{X^{sm}}(-1) \hookrightarrow \pr_2^* T_{\P^N}(-1) \arw \pr_1^* \sS^{\vee}
  \]
  is the zero map on $\Xdo$.
  By the separability of the dominant morphism $\pr_2 : \Xdo \rightarrow X \subset \P^N$,
  $d (\pr_2) : T_{\Xdo} \arw \pr_2^* T_{X^{sm}}$ is surjective (by taking smaller $\Xdo$ if necessary).
  Hence the above map is the zero map if and only if so is
  \begin{align}\label{eq_condition2}
    T_{\Xdo}(-1) \stackrel{d (\pr_2) (-1) }{\longrightarrow} \pr_2^* T_{X^{sm}}(-1) \hookrightarrow \pr_2^* T_{\P^N}(-1) \arw \pr_1^* \sS^{\vee} .
  \end{align}

  By the same argument as in the proof of (i) $\Leftrightarrow$ (ii) in  \cite[Theorem~2.4]{FI},
  we see that 
  \ref{eq_condition2} corresponds to
  \[
  \sO_{\calu}(-1) |_{{X'}\spcirc} \hookrightarrow \pr_1^* \sQ^{\vee} \stackrel{\Phi}{\longrightarrow} \sHom(T_{{X'}\spcirc}, \pr_1^* \sS^{\vee}).
  \]
  under the identification
  \[
  \sHom (  T_{\Xdo}(-1) ,  \pr_1^*\sS\spcheck) = \sHom ( \sO_{\calu}(-1)|_{\Xdo}, \sHom(T_{{X'}\spcirc}, \pr_1^* \sS^{\vee})).
  \]
  Thus (i) and (ii) are equivalent.   
  The proof of the equivalence of (ii) and (iii) is the same as that of \cite[Theorem~2.4]{FI}
\end{proof}

\subsection{Shrinking maps and $m$-th degeneracy maps}

We define the $m$-th degeneracy map as follows, which will play a key role
in the proof of \autoref{cor_sep}.

\begin{defn}\label{thm:def-kappa} 
  Let $X \subset \P^N$ be an $n$-dimensional projective variety
  and 
  $\Xo \subset X^{sm}$ be a non-empty open subset. 
  Set $\pmxo := \pmx \cap (\Gm \times \Xo)$.

  Take
  $d \gamma_m : T_{\pmxo} \rightarrow \gamma_m^{*} T_{\G(m,\P^N)}$
  and
  $d \pi: T_{\pmxo} \rightarrow \pi^{*} T_{X^{sm}}$
  to be the differentials of the $m$-th Gauss map $\gamma_m$
  and the projection $\pi: P_m^X \rightarrow X \subset \PN$.
  Then
  $d\pi (\ker (d \gamma_m)) \subset \pi^*T_{X^{sm}} \subset \pi^*T_{\PN}$
  is isomorphic to $\ker (d \gamma_m)$
  since $ T_{\pmxo} $ is contained in $T_{\G(m,\P^N) \times \P^N} |_{\pmxo}= \gamma_m^{*} T_{\G(m,\P^N)} \oplus \pi^*T_{\PN} $.

  Set $n^-_{\kappa_m} $ to be the rank of the torsion free sheaf
  $\ker (d \gamma_m) $.
  Pulling back the Euler sequence
  \[
  0 \rightarrow \sO_{\PN} \rightarrow H^0(\PN, \sO(1))\spcheck \otimes \sO_{\PN}(1) \rightarrow T_{\PN} \rightarrow 0,
  \]
  we have $\theta: H^0(\PN, \sO(1))\spcheck \otimes \sO_{\pmxo} \rightarrow \pi^*T_{\PN}(-1)$.
  Then the subsheaf
  \[
  \theta^{-1} (d\pi (\ker (d \gamma_m))(-1))
  \subset
  H^0(\P^N,\sO(1))^{\vee} \otimes \sO_{\pmxo}
  \]
  induces a rational map
  \[
  \kappa_m : P_m^{X} \dashrightarrow \G(n^-_{\kappa_m} , \P^N ),
  \]
  which we call the $m$-th \emph{degeneracy map} of $X$.
  In the case of $m=n$,
  $\kappa_n$ can be identified with the usual degeneracy map
  under the birational map $P_n^{X} \rightarrow X$.
\end{defn}

The rest of this section is devoted to the proof of the following proposition,
which is a generalization of \cite[Proposition 5.2]{FI}.

\begin{prop}\label{prop:m-th_kap-defby-shr}
  Let $X \subset \PN$ be a projective variety.
  Then the $m$-th \emph{degeneracy map} $ \kappa_m $ coincides with the shrinking map $\sigma_{P_m^{X},\gamma_m}$ of $P_m^X$ with respect to $\gamma_m: P_m^X \rightarrow \Gm$.
\end{prop}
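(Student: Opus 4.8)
The plan is to unwind both constructions into explicit subsheaves of the trivial bundle $H^0(\PN,\sO(1))\spcheck\otimes\sO_{\pmxo}$ on a common open set $\pmxo\subset (P_m^X)^{sm}$, and check that they agree. On the one hand, $\sigma_{P_m^X,\gamma_m}$ is by \autoref{thm:def-shr-Zf} induced by the subbundle $\ker\Phi_{\gamma_m}\subset\gamma_m^*\sQ\spcheck\subset H^0(\PN,\sO(1))\spcheck\otimes\sO_{\pmxo}$, where $\Phi_{\gamma_m}:\gamma_m^*\sQ\spcheck\to\sHom(T_{\pmxo},\gamma_m^*\sS\spcheck)$ is the composite built from the duality pairing $\sS\otimes\sS\spcheck\to\sO$ and the differential $d\gamma_m$. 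On the other hand, $\kappa_m$ is induced by $\theta^{-1}\bigl(d\pi(\ker(d\gamma_m))(-1)\bigr)$. So the proposition amounts to the identity of subsheaves
\[
\ker\Phi_{\gamma_m} \;=\; \theta^{-1}\bigl(d\pi(\ker(d\gamma_m))(-1)\bigr)
\]
inside $H^0(\PN,\sO(1))\spcheck\otimes\sO_{\pmxo}$, after possibly shrinking $\pmxo$ so that all the sheaves in sight are bundles of the expected ranks.

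First I would recall, as in the proof of \autoref{thm_chara_Pmx}, that the natural surjection $\ep:\overline{\pr}_2^*T_{\PN}(-1)\to\overline{\pr}_1^*\sS\spcheck$ on $\calu$ restricts on $\pmxo$ to a map $\pi^*T_{\PN}(-1)\to\gamma_m^*\sS\spcheck$ whose kernel is exactly $\pi^*T_X(-1)$ (here one uses $P_m^X\subset\calu$ and the definition of the incidence). Dually, the Euler map $\theta:H^0(\PN,\sO(1))\spcheck\otimes\sO_{\pmxo}\to\pi^*T_{\PN}(-1)$ has kernel $\sO_{\calu}(-1)|_{\pmxo}=\pi^*\sO_{\PN}(-1)$, and the quotient $\gamma_m^*\sQ\spcheck=\bigl(H^0\otimes\sO\bigr)/\sS$ sits so that the composite $H^0\otimes\sO\to\pi^*T_{\PN}(-1)\to\gamma_m^*\sS\spcheck$ factors through $\gamma_m^*\sQ\spcheck$. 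Next I would identify $\ker(d\gamma_m)$: since $T_{\pmxo}\subset\gamma_m^*T_{\Gm}\oplus\pi^*T_{\PN}$ and $d\pi$ is an isomorphism onto $d\pi(\ker d\gamma_m)\subset\pi^*T_X\subset\pi^*T_{\PN}$ (stated in \autoref{thm:def-kappa}), the sheaf $d\pi(\ker d\gamma_m)(-1)$ is precisely the part of $\pi^*T_X(-1)$ on which $d\gamma_m$ vanishes. The key computation is then to translate ``$d\gamma_m$ vanishes on a tangent vector'' into ``$\Phi_{\gamma_m}$ vanishes on the corresponding section of $\gamma_m^*\sQ\spcheck$'': this is exactly the bilinear-pairing bookkeeping already carried out in \autoref{thm_chara_Pmx} (the identification $\sHom(T_{\pmxo}(-1),\gamma_m^*\sS\spcheck)=\sHom(\sO_{\calu}(-1)|_{\pmxo},\sHom(T_{\pmxo},\gamma_m^*\sS\spcheck))$ and the "same argument as in \cite[Theorem~2.4]{FI}"), read one fibre at a time rather than globally. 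Concretely, a local section $v$ of $H^0\otimes\sO$ maps into $\ker\Phi_{\gamma_m}$ iff its image $\bar v\in\gamma_m^*\sQ\spcheck$ pairs to zero against $d\gamma_m$, iff $\theta(v)\in\pi^*T_{\PN}(-1)$ actually lies in $\ker\ep=\pi^*T_X(-1)$ \emph{and} its image in $d\pi(\ker d\gamma_m)(-1)$, i.e. iff $v\in\theta^{-1}(d\pi(\ker d\gamma_m)(-1))$. Matching the two descriptions of the subsheaf gives both the equality of ranks $m^-_{\sigma}=n^-_{\kappa_m}$ and the equality of the induced rational maps to a Grassmannian.

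I would organize the write-up in two steps: (1) identify $\ker\ep|_{\pmxo}=\pi^*T_X(-1)$ and pin down how $\theta$, $\ep$, and the Euler sequences interact, so that $\theta^{-1}(\pi^*T_X(-1))$ is the preimage of the kernel of the composite $H^0\otimes\sO\to\gamma_m^*\sS\spcheck$; (2) impose the further condition coming from $d\gamma_m$ and use the pairing identity from \autoref{thm_chara_Pmx} to see that cutting down $\pi^*T_X(-1)$ by $\ker(d\gamma_m)$ corresponds exactly to cutting down $\gamma_m^*\sQ\spcheck$ by $\ker\Phi_{\gamma_m}$. The main obstacle I expect is purely bookkeeping: one must be careful that $d\pi$ restricted to $\ker(d\gamma_m)$ really is an isomorphism onto its image (which is why we work on the open set $\pmxo$ and pass to torsion-free/locally free loci), and that the Euler-sequence twist by $\sO(-1)$ is carried consistently through the duality pairing $\sS\otimes\sS\spcheck\to\sO$ — this is where sign/twist errors creep in, and where invoking the already-proven \cite[Theorem~2.4]{FI} and \autoref{thm_chara_Pmx} verbatim, rather than redoing the linear algebra, keeps the argument honest. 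Once the fibrewise identification of the two subspaces of $H^0(\PN,\sO(1))\spcheck$ is in hand, the conclusion that $\kappa_m=\sigma_{P_m^X,\gamma_m}$ is immediate from \autoref{thm:def-shr-Zf}.
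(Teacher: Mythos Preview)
Your strategy of directly comparing the two subsheaves of $H^0(\PN,\sO(1))\spcheck\otimes\sO_{\pmxo}$ is the right framing, but there is a genuine gap at the ``key computation''. Unwinding definitions, $d\gamma_m$ gives a bilinear map
\[
b:T_{\pmxo}\otimes\gamma_m^*\sQ\spcheck\longrightarrow\gamma_m^*\sS\spcheck,\qquad b(t,q)=(d\gamma_m(t))(q).
\]
Then $\ker\Phi_{\gamma_m}=\{q:b(\,\cdot\,,q)=0\}$ is the \emph{right} radical of $b$, whereas $\ker(d\gamma_m)=\{t:b(t,\,\cdot\,)=0\}$ is the \emph{left} radical. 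Your chain of equivalences amounts to asserting that, after transporting along $d\pi$ and $\theta$, the right and left radicals determine the same subbundle of $H^0\spcheck\otimes\sO$. For an arbitrary bilinear map this is simply false, and neither \autoref{thm_chara_Pmx} nor \cite[Theorem~2.4]{FI} supplies it: those results only show that the tautological line $\sO_{\calu}(-1)$ lies in $\ker\Phi_{\gamma_m}$, i.e.\ they locate a single vector in the right radical, not the full left--right identification you need.

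The missing ingredient is the symmetry of the second fundamental form $\tau:T_{\Xo}\otimes T_{\Xo}\to N_{\Xo/\PN}$. The paper factors through the flag variety $\mathbb{F}(n,m;\PN)$ (\autoref{lem_comm_diagram}) so as to reduce both radicals to the kernels of homomorphisms $\varphi_1,\varphi_2:\pr_2^*T_{\Xo}\to\sHom(\pr_2^*T_{\Xo}(-1),\pr_1^*\sS\spcheck)$ obtained from $\tau$ by inserting the argument in the first, respectively second, slot (\autoref{calim_degeneracy_map} and \autoref{calim_Phi}); then $\varphi_1=\varphi_2$ because $\tau$ is symmetric, and the proposition follows. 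Without invoking $\tau$ (or an equivalent symmetry statement), your step~(2) cannot be completed. As a secondary point, your claim $\ker\ep=\pi^*T_X(-1)$ is incorrect for $m>n$: on $\pmxo$ one has $\ker\ep=\gamma_m^*\sQ\spcheck/\sO(-1)$ of rank $m$, while $\pi^*T_X(-1)$ is only a rank-$n$ subbundle of it; this discrepancy is precisely why the passage through $\Gn$ and the flag variety is needed in the first place.
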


First we recall
the definition of the second fundamental form $\tau$.
Let $\sS_{n}$ and $\sQ_{n}$ be
the universal subbundle and the universal quotient bundle over $\Gn$.
Considering the $n$-th Gauss map (ordinary Gauss map)
$\gamma_n: \Xo \rightarrow \Gn$
and considering the dual of the sequence \autoref{eq:Euler0} for $m=n$,
we have the following commutative diagram with exact rows and columns,
\begin{equation}\label{eq:Euler}
\begin{aligned}
  \xymatrix{
    & 0 \ar[d] & 0 \ar[d]
    \\
    & \sO_{\Xo} \ar[d] \ar@{=}[r]& \sO_{\Xo} \ar[d]
    \\
    0 \ar[r] & 
    \gamma_n^*\sQ_{n}\spcheck(1) \ar[r] \ar[d] &  H^0(\PN, \sO(1))\spcheck \otimes \sO_{\Xo}(1) \ar[d] \ar[r] & \gamma_n^*\sS_{n}\spcheck(1) \ar@{=}[d] \ar[r] & 0
    \\
    0 \ar[r] & T_{\Xo} \ar[r] \ar[d]
    & T_{\PN} |_{\Xo} \ar[r] \ar[d]& N_{\Xo/\PP^N}  \ar[r] & 0
    \\
    & 0 & 0 \makebox[0mm]{ $,$}
  }
  \end{aligned}
\end{equation}
where the middle vertical sequence is induced from the Euler sequence on $\PN$.
The differential
$d\gamma_n: T_{\Xo} \rightarrow \gamma_n^*T_{\Gn} = \gamma_n^*\sHom(\sQ_{n}\spcheck, \sS_{n}\spcheck)$ induces a homomorphism
\[
\widetilde{d{\gamma_n}}: T_{\Xo} \otimes \gamma_n^*\sQ_{n}\spcheck \rightarrow \gamma_n^*\sS_{n}\spcheck = N_{\Xo/\P^N}(-1).
\]
Then we can check that,
the composition of
$T_{\Xo} \otimes \sO_{\Xo} \hookrightarrow T_{\Xo} \otimes \gamma_n^*\sQ_{n}\spcheck(1)$ induced by the first vertical sequence of \ref{eq:Euler}
and
$\widetilde{d{\gamma_n}}(1): T_{\Xo} \otimes \gamma_n^*\sQ_{n}\spcheck(1) \rightarrow N_{\Xo/\P^N}$
is the zero map. 
Hence a homomorphism, called \emph{the second fundamental form},
\[
\tau: T_{\Xo} \otimes T_{\Xo} \rightarrow  N_{\Xo/\P^N}
\]
is induced.
By definition, $\widetilde{d{\gamma_n}}(1)$ factors through $\tau$.
In other words,
$d \gamma_n $ factors as
\[
T_{\Xo} \rightarrow  \sHom( T_{\Xo}(-1) ,  \gamma_n^* \sS_{n}\spcheck ) \hookrightarrow  \gamma_n^*\sHom(\sQ_{n}\spcheck, \sS_{n}\spcheck),
\]
where the inclusion is induced by the surjection $\gamma_n^*\sQ_{n}\spcheck \rightarrow T_{\Xo}(-1) $.
It is known that $\tau$
is symmetric.

\vspace{2mm}
Next we consider the following homomorphism $\phi$.
Set $\pmxo = \pmx \cap (\Gm \times \Xo)$ as before.
In this subsection, we hereafter write $\pr_i$
to be the projection from
$P_m^X \subset \Gm \times X$ to the $i$-the factor.
Indeed, $\pr_1$ and $\pr_2$ are nothing but
``$\gamma_m$'' and ``$\pi$'', respectively.
But we use the symbol ``$\pr_i$'' to suit the notation of \cite{FI}.
For simplicity of notation,
we also use $\pr_i$ for the restricted morphism
$\pr_i|_{\pmxo}$.

Write $\sS = \sS_{m}$ and $\sQ = \sQ_{m}$.
Pulling back $\tau \otimes \sO(-1)$ by $\pr_2 : P_m^X \arw X$
and composing with the natural homomorphism $\pr_2^* N_{\Xo/\P^N}(-1) \arw \pr_1^* \sS^{\vee}$,
we have the homomorphism on $\pmxo$: \[
\varphi : \pr_2^* (T_{\Xo} \otimes T_{\Xo} (-1)) \arw \pr_2^* N_{\Xo/\P^N} (-1) \arw  \pr_1^* \sS^{\vee}.
\]
We define $\varphi_i: \pr_2^* T_{\Xo}  \rightarrow  \sHom(\pr_2^* T_{\Xo}(-1),  \pr_1^*\sS\spcheck) $ for $i=1,2$ by
\begin{align*}
  \varphi_1(x) &= [\pr_2^* T_{\Xo} (-1)   \rightarrow  \pr_1^*\sS\spcheck: y \mapsto \varphi(x, y)],\\
  \varphi_2(x) &= [\pr_2^* T_{\Xo} (-1)  \rightarrow  \pr_1^*\sS\spcheck: y \mapsto \varphi(y, x)].
\end{align*}
We sometimes regard $\varphi_i$ as a homomorphism
\[
\pr_2^* T_{\Xo} \rightarrow \sHom( \pr_2^* \gamma_n^* \sQ_n\spcheck ,  \pr_1^*\sS\spcheck)
\]
by the natural surjection $ \gamma_n^* \sQ_n \spcheck \rightarrow  T_{\Xo}(-1)$.
By definition,
$\varphi_1$ is the homomorphism induced from
$\pr_2^* d\gamma_n: \pr_2^*T_{\Xo} \rightarrow \pr_2^*\gamma_n^*\sHom(\sQ_{n}\spcheck, \sS_{n}\spcheck)$
and $\pr_2^* \gamma_n^* \sS_n\spcheck = \pr_2^* N_{\Xo/\P^N}(-1) \arw \pr_1^* \sS^{\vee}$.
Furthermore, $\varphi_1 = \varphi_2$ holds because of the symmetry of $\tau$.

We have the following lemma in a general setting.

\begin{lem}\label{lem_comm_diagram}
  Let $n \leq m < N$ be non-negative integers
  and let $\mathbb{F}(n,m ;\P^N) \subset \Gr(n, \PN) \times \Gr(m, \PN)$ be the frag variety
  parametrizing linear subvarieties
  $\PP^{n} \subset \PP^{m}$ in $\PN$.
  Let $q_n : \mathbb{F}(n,m ;\P^N) \arw \G(n,\P^N)$ and
  $q_m : \mathbb{F}(n,m ;\P^N) \arw \G(m,\P^N)$
  be the natural projections.
  Then the following diagram on $\mathbb{F}(n,m ;\P^N)$ is commutative;
  \begin{equation}\label{eq:flag-comm}
  \begin{aligned}
    \xymatrix{
      T_{\mathbb{F}(n,m ;\P^N)}   \ar[d]_(.5){d q_m} \ar[r]^(.4){d q_n}
      &    q_n^* T_{\G(n,\P^N)} =  q_n^* (\sQ_{n} \otimes  \sS_{n}\spcheck)  \ar[d]
      \\
      q_m^* T_{\G(m,\P^N)} =  q_m^* (\sQ_{m} \otimes  \sS_{m}\spcheck)\ar[r]
      & q_n^* \sQ_{n} \otimes   q_m^*  \sS_{m}\spcheck ,
    }
    \end{aligned}
  \end{equation}
  where the bottom and right maps are induced by the natural homomorphisms $q_n^* \sS_{n}^{\vee} \arw q_m^* \sS_{m}^{\vee} $
  and $q_m^* \sQ_{m} \arw q_n^* \sQ_{n}$ respectively.
  
  In particular,
  we obtain a commutative diagram with exact rows
  \begin{equation}\label{diag_lem_comm}
    \begin{aligned}
      \xymatrix@C=15pt{
        0 \ar[r] & T_{\mathbb{F}(n,m ;\P^N) / \G(n,\P^N)}     \ar[r] \ar[d]_{\wr}& T_{\mathbb{F}(n,m ;\P^N)}   \ar[d]_{dq_m}\ar[r]^{dq_n}
        &    q_n^* T_{\G(n,\P^N)}  \ar[d] \ar[r]& 0
        \\
        0 \ar[r] & \mathscr{K} \otimes q_m^* \sS_{m}\spcheck \ar[r]&    q_m^* (\sQ_{m} \otimes  \sS_{m}\spcheck)\ar[r]
        & q_n^* \sQ_{n} \otimes   q_m^*  \sS_{m}\spcheck \ar[r] & 0,
      } 
    \end{aligned}
  \end{equation}
  where $\mathscr{K} = \ker   (  q_m^*  \sQ_{m} \rightarrow q_n^* \sQ_{n} ) \simeq q_n^* \sS_{n} / q_m^* \sS_{m}$.
\end{lem}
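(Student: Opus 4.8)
The plan is to reduce the commutativity of \eqref{eq:flag-comm} to a fibrewise assertion and then read it off from the standard linear-algebra model of the tangent bundle of a flag variety. Since $\mathbb{F}(n,m;\P^N)$ is smooth and every sheaf in \eqref{eq:flag-comm} is locally free, the difference of the two compositions $T_{\mathbb{F}(n,m;\P^N)}\to q_n^*\sQ_n\otimes q_m^*\sS_m\spcheck$ is a morphism of vector bundles, so it suffices to check that it vanishes at each closed point. A point of $\mathbb{F}(n,m;\P^N)$ is a flag of subspaces $S_m\subset S_n$ of $V:=H^0(\P^N,\sO(1))$ with $\dim S_m=N-m$ and $\dim S_n=N-n$; these are the fibres of $q_m^*\sS_m\hookrightarrow q_n^*\sS_n$, and fibrewise the maps $q_n^*\sS_n\spcheck\to q_m^*\sS_m\spcheck$ and $q_m^*\sQ_m\to q_n^*\sQ_n$ occurring in the lemma are the restriction of functionals $S_n\spcheck\to S_m\spcheck$ and the projection $V/S_m\to V/S_n$.

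Next I would recall the model itself. The fibre of $q_n^*T_{\G(n,\P^N)}=q_n^*(\sQ_n\otimes\sS_n\spcheck)$ at $[S_m\subset S_n]$ is $\Hom(S_n,V/S_n)$, and likewise for $m$; the tangent space of $\mathbb{F}(n,m;\P^N)$ at that point is the subspace of $\Hom(S_m,V/S_m)\oplus\Hom(S_n,V/S_n)$ consisting of pairs $(\psi_m,\psi_n)$ for which $\psi_n|_{S_m}$ equals the composite $S_m\xrightarrow{\psi_m}V/S_m\to V/S_n$, with $dq_m$ and $dq_n$ the two projections. Under these identifications, going around \eqref{eq:flag-comm} along the top and then down the right sends $(\psi_m,\psi_n)$ to $\psi_n|_{S_m}\in\Hom(S_m,V/S_n)$, whereas going down the left and then along the bottom sends it to the composite of $\psi_m$ with $V/S_m\to V/S_n$; these agree for every element of $T_{\mathbb{F}(n,m;\P^N)}$ precisely by the defining compatibility, which is exactly the commutativity we want. (Alternatively one can argue by the functor of points: $\mathbb{F}(n,m;\P^N)$ represents flags of quotients of $V\otimes\sO$, all four arrows of \eqref{eq:flag-comm} are induced from the universal flag, and commutativity is a Yoneda formality; the computation above is the concrete shadow of this.)

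For the second half I would identify the two rows of \eqref{diag_lem_comm} directly. The top row is the relative tangent sequence $0\to T_{\mathbb{F}(n,m;\P^N)/\G(n,\P^N)}\to T_{\mathbb{F}(n,m;\P^N)}\xrightarrow{dq_n}q_n^*T_{\G(n,\P^N)}\to 0$ of the smooth morphism $q_n$, and the bottom row is the sequence $0\to\mathscr{K}\to q_m^*\sQ_m\to q_n^*\sQ_n\to 0$ tensored with the locally free sheaf $q_m^*\sS_m\spcheck$. The isomorphism $\mathscr{K}\simeq q_n^*\sS_n/q_m^*\sS_m$ follows by applying the snake lemma to the two pulled-back Euler sequences \eqref{eq:Euler0} for $m$ and for $n$ (equivalently, fibrewise $\ker(V/S_m\to V/S_n)=S_n/S_m$). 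In the model above, $\ker(dq_n)$ consists of the pairs $(\psi_m,0)$ with $\psi_m(S_m)\subset S_n/S_m$, so it is canonically $\Hom(q_m^*\sS_m,\mathscr{K})=\mathscr{K}\otimes q_m^*\sS_m\spcheck$, and $dq_m$ maps it isomorphically onto $\mathscr{K}\otimes q_m^*\sS_m\spcheck\subset q_m^*\sQ_m\otimes q_m^*\sS_m\spcheck$; this is the left-hand vertical isomorphism. Its compatibility with the inclusions into the middle terms makes the left square of \eqref{diag_lem_comm} commute, and the right square of \eqref{diag_lem_comm} is \eqref{eq:flag-comm}, already settled.

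I do not anticipate a genuine obstacle: the entire difficulty is bookkeeping with the sub/quotient conventions — keeping track of which of $\sS$, $\sS\spcheck$, $\sQ$ occupies each slot, and making sure the single symbol $\mathscr{K}$ is compatibly identified in each of its three roles (as $\ker(q_m^*\sQ_m\to q_n^*\sQ_n)$, as $q_n^*\sS_n/q_m^*\sS_m$, and as the target of the isomorphism from $T_{\mathbb{F}(n,m;\P^N)/\G(n,\P^N)}$) — together with the routine passage from the fibrewise check to the statement about sheaves.
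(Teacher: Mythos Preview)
Your proposal is correct and follows essentially the same route as the paper: the paper reduces the commutativity of \eqref{eq:flag-comm} to a direct check in local coordinates (citing the $n=0$ case from \cite{FI} and leaving the rest to the reader), and identifies the left vertical isomorphism in \eqref{diag_lem_comm} via the description of $\mathbb{F}(n,m;\P^N)$ as the Grassmann bundle $G(N-m,\sS_n\spcheck)$ over $\G(n,\P^N)$. Your fibrewise linear-algebra model is the intrinsic version of the same verification, and you supply the details the paper omits.
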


\begin{proof}
  
  The case when $n = 0$ is nothing but
  \cite[Lemma 2.6]{FI} ($m$ corresponds to ``$n$'' of that).
  As in the proof of that lemma,
  the commutativity of \autoref{eq:flag-comm} can be checked
  by taking local coordinates on $\P^N$.
  We leave the detail to the reader.
  
  We note that the left homomorphism in \autoref{diag_lem_comm} is an isomorphism since $\mathbb{F}(n,m ;\P^N)$ is nothing but the Grassmann bundle $G(N-m , \sS_{n} \spcheck)$ over $ \G(n,\P^N)$.
\end{proof}

Now return to the original setting.
The sheaves
\[
\pr_2^* (\gamma_n^*\sQ_{n}\spcheck) \subset \pr_1^* \sQ^{\vee} \subset H^0 (\P^N,\sO(1))^{\vee} \otimes \sO_{\pmxo}
\]
on $\pmxo \subset \Gm \times \Xo$ define a morphism 
\[
(\gamma_n \circ \pr_2 , \pr_1) :  \pmxo \arw \mathbb{F}(n,m ;\P^N) \quad : \quad (x,L) \mapsto (\TT_x X, L).
\]
We note that $\pmxo$ is the fiber product of $\Xo$ and $\mathbb{F}(n,m ;\P^N) $ over $ \G(n,\P^N)$ by the diagram
\begin{equation}\label{diag_fiber_product}
  \begin{aligned}
    \xymatrix{
      \pmxo  \ar[d]_{\pr_2} \ar[rr]^{\kern-2em(\gamma_n \circ \pr_2 , \pr_1) }
      & &  \mathbb{F}(n,m ;\P^N)   \ar[d]
      \\
      \Xo  \ar[rr]^{\gamma_n}
      & & \G(n,\P^N).
    }   \end{aligned}
\end{equation}

\begin{lem}\label{calim_degeneracy_map}
  The kernel of the differential
  $d \pr_1 : T_{\pmxo} \arw \pr_1^* T_{\G(m,\P^N)}$
  is isomorphic to the kernel of $\varphi_1$
  under $d \pr_2: T_{\pmxo} \rightarrow \pr_2^* T_{\Xo}$.
\end{lem}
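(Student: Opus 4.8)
The plan is to unwind the definitions of $\varphi_1$ and $d\pr_1$ and match their kernels using the fiber-product description \autoref{diag_fiber_product} together with \autoref{lem_comm_diagram}. First I would recall that, since $\pmxo$ is the fiber product of $\Xo$ and $\mathbb{F}(n,m;\PN)$ over $\G(n,\PN)$ via the maps $\gamma_n \circ \pr_2$ and $q_n$, we have an exact sequence
\[
0 \rightarrow T_{\pmxo/\Xo} \rightarrow T_{\pmxo} \xrightarrow{d\pr_2} \pr_2^* T_{\Xo}
\]
with $T_{\pmxo/\Xo} \simeq (\gamma_n\circ\pr_2, \pr_1)^* T_{\mathbb{F}(n,m;\PN)/\G(n,\PN)}$, and the latter is identified via \autoref{lem_comm_diagram} (the left vertical isomorphism in \autoref{diag_lem_comm}) with $\mathscr{K} \otimes \pr_1^*\sS^{\vee}$, where $\mathscr{K} = \ker(\pr_1^*\sQ \rightarrow \pr_2^*\gamma_n^*\sQ_n)$. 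Since $\gamma_m = \pr_1$ restricted to $\pmxo$ factors through $(\gamma_n\circ\pr_2,\pr_1)$ followed by $q_m$, I would compute $d\pr_1$ by pulling back diagram \autoref{diag_lem_comm} along $(\gamma_n\circ\pr_2,\pr_1)$ and combining with $d\pr_2$.

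Next, the key computation: restricting the pulled-back diagram \autoref{diag_lem_comm} to $\pmxo$ and using the fiber-square sequence above, one gets a commutative square relating $T_{\pmxo}$, $\pr_2^*T_{\Xo}$, $\pr_1^*(\sQ_m \otimes \sS_m^{\vee})$ and $\pr_2^*\gamma_n^*\sQ_n \otimes \pr_1^*\sS_m^{\vee}$. The point is that the composite $T_{\pmxo} \xrightarrow{d\pr_1} \pr_1^*T_{\G(m,\PN)} \rightarrow \pr_2^*\gamma_n^*\sQ_n \otimes \pr_1^*\sS^{\vee}$ equals the composite $T_{\pmxo} \xrightarrow{d\pr_2} \pr_2^*T_{\Xo} \xrightarrow{\varphi_1'} \pr_2^*\gamma_n^*\sQ_n \otimes \pr_1^*\sS^{\vee}$, where $\varphi_1'$ is $\varphi_1$ viewed (via the surjection $\gamma_n^*\sQ_n^{\vee} \rightarrow T_{\Xo}(-1)$) as a map $\pr_2^*T_{\Xo} \to \sHom(\pr_2^*\gamma_n^*\sQ_n^{\vee}, \pr_1^*\sS^{\vee})$; this is exactly the factorization of $d\gamma_n$ through $\tau$ recalled just before the lemma, composed with $\gamma_n^*\sS_n^{\vee} = N_{\Xo/\PN}(-1) \to \pr_1^*\sS^{\vee}$. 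Then I would chase: $v \in \ker(d\pr_1)$ iff $d\pr_1(v) = 0$; since the bottom row of the pulled-back \autoref{diag_lem_comm} is left-exact with kernel $\mathscr{K}\otimes\pr_1^*\sS^{\vee}$, and the left vertical map is an isomorphism identifying $T_{\pmxo/\Xo}$ with that kernel, $d\pr_1(v)=0$ forces $d\pr_2(v)$ to lie in $\ker(\varphi_1)$, and conversely any $v$ with $d\pr_2(v) \in \ker\varphi_1$ and $d\pr_1(v)$ landing in the image of $T_{\pmxo/\Xo}$ must have $d\pr_1(v)=0$ because $\pr_1$ is constant on the fibers of $\pr_2|_{\pmxo/\Xo}$... more precisely, $d\pr_1$ is injective on $T_{\pmxo/\Xo}$ since $(\gamma_n\circ\pr_2,\pr_1)$ is a closed immersion onto the fiber and $q_m$ restricted to that fiber is an immersion. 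Hence $d\pr_2$ maps $\ker(d\pr_1)$ isomorphically onto $\ker(\varphi_1)$.

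Finally I would assemble these into the stated isomorphism: $d\pr_2$ is injective on $\ker(d\pr_1)$ because $T_{\pmxo/\Xo} \cap \ker(d\pr_1) = 0$ (as $d\pr_1|_{T_{\pmxo/\Xo}}$ is injective, shown above), and it is surjective onto $\ker(\varphi_1)$ by the diagram chase. I expect the main obstacle to be the bookkeeping in identifying, compatibly with all the Euler-sequence twists, the composite $\pr_1^*T_{\G(m,\PN)} \to \pr_2^*\gamma_n^*\sQ_n \otimes \pr_1^*\sS^{\vee}$ of \autoref{lem_comm_diagram} with the map through which $\varphi_1$ is defined — i.e. checking that "$\varphi_1$ is the homomorphism induced from $\pr_2^*d\gamma_n$ and $\pr_2^*\gamma_n^*\sS_n^{\vee} \to \pr_1^*\sS^{\vee}$" is literally the instance of diagram \autoref{eq:flag-comm} pulled back along $(\gamma_n\circ\pr_2,\pr_1)$. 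This is essentially the $n \geq 0$ analogue of the argument in \cite[Proposition~5.2]{FI}; once the diagram identification is in place, the kernel comparison is formal.
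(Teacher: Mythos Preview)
Your approach is exactly the paper's: pull back diagram \autoref{diag_lem_comm} along $(\gamma_n\circ\pr_2,\pr_1)$, use the Cartesian square \autoref{diag_fiber_product} to identify the top row with the relative tangent sequence $0 \to T_{\pmxo/\Xo} \to T_{\pmxo} \to \pr_2^*T_{\Xo} \to 0$ and the right vertical arrow with $\varphi_1$, and read off $\ker d\pr_1 \simeq \ker\varphi_1$ from the left isomorphism. One slip: $\pr_1$ is certainly not constant on the fibers of $\pr_2$ (the fiber over $x$ is the Grassmannian of $m$-planes containing $\TT_xX$, embedded in $\Gm$ via $\pr_1$), but your immediate self-correction to ``$d\pr_1|_{T_{\pmxo/\Xo}}$ is injective'' is the correct statement and is all that the diagram chase requires.
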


\begin{proof}
  By pulling back the diagram \autoref{diag_lem_comm} by $(\gamma_n \circ \pr_2 , \pr_1)  $,
  we have a commutative diagram with exact rows
  \[
  \xymatrix{   0 \ar[r] &T_{\pmxo /\Xo} \ar[r] \ar[d]_{\wr} &  T_{\pmxo}  \ar[r]^(.35){d \pr_2} \ar[d]_{d \pr_1}
    &  \pr_2^* T_{\Xo}  \ar[d]^{ \varphi_1} \ar[r]&0
    \\
    0 \ar[r] & (\gamma_n \circ \pr_2 , \pr_1)^* \mathscr{K} \otimes   \pr_1^* \sS\spcheck  \ar[r] &  \pr_1^* (\sQ \otimes  \sS\spcheck)   \ar[r]
    &  \pr_2^* ( \gamma_n^*\sQ_{n}) \otimes   \pr_1^*  \sS\spcheck  \ar[r] &0
  } 
  \]
  on $\pmxo \subset \Gm \times \Xo$.
  We note that the left homomorphism is an isomorphism
  since the diagram \autoref{diag_fiber_product} is Cartesian.
  Hence $\ker d \pr_1 \simeq \ker \varphi_1$ holds.
\end{proof}

\begin{lem}\label{calim_Phi}
  For $\Phi = \Phi_{\pr_1}$,
  $\ker \Phi \subset \pr_1^* \sQ^{\vee} \subset H^0(\PN, \sO(1))\spcheck \otimes \sO_{\pmxo}$ coincides with 
  the kernel of the composite homomorphism 
  \[
  \Phi' \ : \  \pr_2^*(\gamma_n^*\sQ_{n}\spcheck) \arw \pr_2^* T_{\Xo}(-1) \stackrel{\varphi_ 2(-1)}{\longrightarrow}  \pr_2^* \Omega_{\Xo} \otimes  \pr_1^*\sS\spcheck
  \]
  as a subsheaf of $ H^0(\PN, \sO(1))\spcheck \otimes \sO_{\pmxo}$.
\end{lem}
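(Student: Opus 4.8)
The plan is to compare the two kernels as subsheaves of the common ambient sheaf $H^0(\PN,\sO(1))\spcheck \otimes \sO_{\pmxo}$ by tracing through the definitions of $\Phi = \Phi_{\pr_1}$ and $\Phi'$ and showing that, when restricted to the subsheaf $\pr_2^*(\gamma_n^*\sQ_{n}\spcheck) \subset \pr_1^*\sQ\spcheck$, the two maps agree up to an injective factor. The essential point is that $\Phi$ is built from the full differential $d\pr_1 = d\gamma_m$, whereas $\Phi'$ is built from the second fundamental form $\tau$ (via $\varphi_2$); the link between them is exactly the factorization ``$d\gamma_n$ factors through $\tau$'' recorded in the discussion of \autoref{eq:Euler}, propagated to $P_m^X$ through \autoref{lem_comm_diagram} and the Cartesian square \autoref{diag_fiber_product}.

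First I would unwind $\Phi$: by \autoref{thm:def-shr-Zf}, $\Phi\colon \pr_1^*\sQ\spcheck \to \sHom(T_{\pmxo}, \pr_1^*\sS\spcheck)$ is the composite of the contraction against the tautological pairing $\sS\otimes\sS\spcheck\to\sO$ with the map induced by $d\pr_1\colon T_{\pmxo}\to \pr_1^*T_{\Gm} = \pr_1^*\sHom(\sQ\spcheck,\sS\spcheck)$. Restricting the source to $\pr_2^*(\gamma_n^*\sQ_n\spcheck)\subset\pr_1^*\sQ\spcheck$ and using the surjection $\gamma_n^*\sQ_n\spcheck\twoheadrightarrow T_{\Xo}(-1)$, I would show that $\Phi|_{\pr_2^*(\gamma_n^*\sQ_n\spcheck)}$ is identified — after adjunction — with a map $\pr_2^*(\gamma_n^*\sQ_n\spcheck)\to \sHom(T_{\pmxo},\pr_1^*\sS\spcheck)$ that factors through $d\pr_2\colon T_{\pmxo}\to\pr_2^*T_{\Xo}$; the precise target is then $\sHom(\pr_2^*T_{\Xo},\pr_1^*\sS\spcheck)$, equivalently (dualizing $T_{\Xo}$) $\pr_2^*\Omega_{\Xo}\otimes\pr_1^*\sS\spcheck(?)$ after the twist. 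This is where \autoref{lem_comm_diagram} enters: pulling back diagram \autoref{diag_lem_comm} by $(\gamma_n\circ\pr_2,\pr_1)$ tells me that the image of $\pr_2^*(\gamma_n^*\sQ_n\spcheck)$ under $d\pr_1$, read in $\pr_1^*(\sQ\otimes\sS\spcheck)$, lands in the subsheaf $\pr_2^*(\gamma_n^*\sQ_n)\otimes\pr_1^*\sS\spcheck$ coming from the right column, and the induced map on that piece is exactly $\varphi_1$ composed with the relevant identifications — but $\varphi_1=\varphi_2$ by symmetry of $\tau$, so this matches the definition of $\Phi'$. Combining, $\Phi$ and $\Phi'$ restrict to the same map on $\pr_2^*(\gamma_n^*\sQ_n\spcheck)$ up to composition with the injection $\pr_2^*(\gamma_n^*\sQ_n)\otimes\pr_1^*\sS\spcheck \hookrightarrow \pr_2^*\Omega_{\Xo}\otimes\pr_1^*\sS\spcheck$... — more precisely, up to the identifications used above they literally coincide — so their kernels agree.

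Concretely, the steps in order: (1) write out $\Phi$ explicitly via the tautological pairing and $d\pr_1$, and restrict its source to $\pr_2^*(\gamma_n^*\sQ_n\spcheck)$; (2) use the Cartesian square \autoref{diag_fiber_product} to split $T_{\pmxo}$ via $0\to T_{\pmxo/\Xo}\to T_{\pmxo}\xrightarrow{d\pr_2} \pr_2^*T_{\Xo}\to 0$ and observe that $\pr_2^*(\gamma_n^*\sQ_n\spcheck)$ pairs trivially with $T_{\pmxo/\Xo}$ — this is the content of \autoref{lem_comm_diagram} applied along $(\gamma_n\circ\pr_2,\pr_1)$, since $T_{\pmxo/\Xo}$ maps to $\mathscr{K}\otimes\pr_1^*\sS\spcheck$ and $\mathscr{K}\subset q_m^*\sQ_m$ is killed under the relevant contraction of $\sQ_n\spcheck$-directions — so $\Phi|_{\pr_2^*(\gamma_n^*\sQ_n\spcheck)}$ factors through $\sHom(\pr_2^*T_{\Xo},\pr_1^*\sS\spcheck)$; (3) identify this factored map with $\Phi'$ using $\varphi_1=\varphi_2$ and the fact, recalled before \autoref{lem_comm_diagram}, that $\varphi_1$ is induced from $\pr_2^*d\gamma_n$ together with $\pr_2^*\gamma_n^*\sS_n\spcheck\to\pr_1^*\sS\spcheck$; (4) conclude $\ker\Phi = \ker\Phi'$ inside $H^0(\PN,\sO(1))\spcheck\otimes\sO_{\pmxo}$, noting that the containment $\pr_2^*(\gamma_n^*\sQ_n\spcheck)\subset\pr_1^*\sQ\spcheck$ and the injectivity of $\Phi$ on the complementary directions (which is where we use that $d\pr_1$ is injective on $T_{\pmxo/\Xo}$, i.e.\ the fiber of $\gamma_m$ over $\Gm$ in the flag direction) guarantee $\ker\Phi$ is already contained in $\pr_2^*(\gamma_n^*\sQ_n\spcheck)$.

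The main obstacle I expect is bookkeeping step (2): making rigorous that $\Phi$ restricted to $\pr_2^*(\gamma_n^*\sQ_n\spcheck)$ annihilates the vertical directions $T_{\pmxo/\Xo}$, so that it genuinely descends to a $\sHom(\pr_2^*T_{\Xo},-)$-valued map and can be compared with $\varphi_2$. This requires carefully matching the two incarnations of $d\pr_1$ — the Grassmannian-tangent description $\pr_1^*\sHom(\sQ\spcheck,\sS\spcheck)$ used in \autoref{thm:def-shr-Zf} and the flag-variety description in \autoref{lem_comm_diagram} — and checking that the contraction defining $\Phi$ sees only the $\sQ_n\spcheck$-component of $\sQ\spcheck$ on the relevant subsheaf; the commutative square \autoref{eq:flag-comm} is precisely the tool for that, so the argument is routine diagram-chasing once set up, but the twists by $\sO(-1)$ and the several adjunctions must be tracked with care.
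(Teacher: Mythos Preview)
Your plan is correct and is essentially the paper's argument unpacked step by step. The paper compresses your steps (2)--(4) into a single commutative diagram with exact rows, obtained by applying the $\Phi$-type adjunction to every column of \autoref{diag_lem_comm} and then pulling back along $(\gamma_n\circ\pr_2,\pr_1)$: the left vertical is already $\Phi'$ (the contraction places $v\in\gamma_n^*\sQ_n\spcheck$ in the second slot of $\widetilde{d\gamma_n}$, hence of $\tau$, so $\varphi_2$ appears directly and the symmetry $\varphi_1=\varphi_2$ is not needed here), the middle vertical is $\Phi$, and the right vertical $\mathscr{K}\spcheck\hookrightarrow\Omega_{\pmxo/\Xo}\otimes\pr_1^*\sS\spcheck$ is injective because $\sO\to\sS\otimes\sS\spcheck$ is, whence $\ker\Phi=\ker\Phi'$ immediately.
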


\begin{proof}
  The diagram \autoref{diag_lem_comm} induces
  \[
  \xymatrix{
    0 \ar[r] & q_n^* \sQ_{n}\spcheck \ar[r] \ar[d] &  q_m^* \sQ\spcheck   \ar[r] \ar[d]
    &  \mathscr{K}\spcheck  \ar@{^(->}[d] \ar[r]&0
    \\
    0 \ar[r] & q_n^* \Omega_{\G(n,\P^N)} \otimes   q_m^* \sS\spcheck  \ar[r] & \Omega_{\mathbb{F}(n,m ;\P^N)} \otimes   q_m^* \sS\spcheck    \ar[r]
    &  \Omega \otimes   q_m^* \sS\spcheck \ar[r] &0
  } 
  \]
  on $\mathbb{F}(n,m ;\P^N)$, 
  where   $\Omega = \Omega_{\mathbb{F}(n,m ;\P^N)/ \G(n,\P^N)} \simeq \mathscr{K}\spcheck \otimes  q_m^* \sS$.
  Since the natural homomorphism $\sO_{\mathbb{F}(n,m ;\P^N)} \rightarrow  q_m^* (\sS \otimes \sS\spcheck)$ is injective,
  so is the right vertical homomorphism.

  By pulling back the above diagram by $(\gamma_n \circ \pr_2 , \pr_1)  $,
  we obtain
  \[
  \xymatrix{   0 \ar[r] & \pr_2^*(\gamma_n^*\sQ_{n}\spcheck) \ar[r] \ar[d]^{\Phi'} &  \pr_1^* \sQ\spcheck   \ar[r] \ar[d]^{\Phi}
    &  (\gamma_n \circ \pr_2 , \pr_1)^* \mathscr{K}\spcheck  \ar@{^(->}[d] \ar[r]&0
    \\
    0 \ar[r] & \pr_2^* \Omega_{T_{\Xo}} \otimes   \pr_1^* \sS\spcheck  \ar[r] & \Omega_{\pmxo} \otimes   \pr_1^* \sS\spcheck    \ar[r]
    &  \Omega_{\pmxo/ \Xo} \otimes   \pr_1^* \sS\spcheck \ar[r] &0,
  } 
  \]
  on $\pmxo \subset \Gm \times \Xo$.
  We note that the middle homomorphism is nothing but $\Phi = \Phi_{\pr_1}$
  and the left one is $\Phi'$ in the statement of this lemma.
  Hence $\ker \Phi $ coincides with $\ker \Phi'$ as a subsheaf of $ H^0(\PN, \sO(1))\spcheck \otimes \sO_{\pmxo}$.
\end{proof}

\begin{proof}[Proof of \autoref{prop:m-th_kap-defby-shr}]
  By \autoref{calim_Phi},
  $\sigma_{P_m^X, \gamma_m}$ is induced by the kernel of the composition
  $\pr_2^*(\gamma_n^*\sQ_{n}\spcheck) \arw \pr_2^* T_{\Xo}(-1) \stackrel{\varphi_2(-1)}{\longrightarrow}  \pr_2^* \Omega_{\Xo} \otimes  \pr_1^*\sS\spcheck$.
  By \autoref{calim_degeneracy_map},
  $\kappa_m$ is induced by the kernel of
  $\pr_2^*(\gamma_n^*\sQ_{n}\spcheck) \arw \pr_2^* T_{\Xo}(-1) \stackrel{\varphi_1(-1)}{\longrightarrow}  \pr_2^* \Omega_{\Xo} \otimes  \pr_1^*\sS\spcheck$.
  Since $\varphi_1=\varphi_2$,
  this proposition follows.
\end{proof}

\begin{proof}[Proof of \autoref{cor_sep}]
  (1) $\Rightarrow$ (2):  
  Let $\kappa_m: P_m^X \dashrightarrow \G(n^-_{\kappa}, \P^N)$
  be the $m$-th degeneracy map,
  where $n^-_{\kappa} =\rank ( \ker d \gamma_m )  $ for $d \gamma_m : T_{\pmxo} \arw \gamma_m^* T_Y$.
  Since $d \gamma_m$ is surjective,
  we have $n^-_{\kappa} = \dim \pmx- \dim Y$.
  From \autoref{prop:m-th_kap-defby-shr}, we have
  $\kappa_m = \sigma_{P_m^X,\gamma_m}$.
  Since $\gamma_m$ is separable, \cite[Remark 2.3]{FI} implies
  $\sigma_{P_m^X,\gamma_m} =\sigma_Y \circ  \gamma_m$; hence
  $\kappa_m = \sigma_Y \circ  \gamma_m$ and $n^-_{\kappa} = m^{-}_{\sigma_Y} = m^{-}$ hold.
  Thus we have $m^-=\dim P_m^X- \dim Y$.

  By \autoref{thm_chara_Pmx},
  it holds that $P_m^X \subset \sigma_Y^{*} \calu_{\G(m^-,\P^N)}$.
  Since 
  \[
  \dim \sigma_Y^{*} \calu_{\G(m^-,\P^N)} = \dim Y + m^- = \dim P_m^X,
  \]
  $P_m^X$ coincides with $\sigma_Y^{*} \calu_{\G(m^-,\P^N)}$.

  (3) $ \Rightarrow$ (2):
  Set $X' = \sigma_Y^{*} \calu_{\G(m^-,\P^N)}$.
  Since $\pr_1 : X' \arw Y$ is a projective bundle,
  it is separable;
  hence $\sigma_{X',\pr_1} =\sigma_Y \circ \pr_1$ holds and then the condition (iii) of \autoref{thm_chara_Pmx} is satisfied.
  Thus \autoref{thm_chara_Pmx} implies (2).

  The implications (2) $\Rightarrow$ (1) and (2) $\Rightarrow$ (3)
  follow immediately.
\end{proof}

As we have already seen,
\autoref{cor_sep} implies \autoref{thm:m-th-linearity}.
In fact, we have the following result.

\begin{cor}\label{thm:sigma_m-eq-cloc}
  Assume that $\gamma_m$ is separable.
  Let $\sigma_{X_{m}^*} = \sigma_{X_{m}^*, \iota}: X_{m}^* \dashrightarrow \Gr(m^{-}, \PN)$
  be the shrinking map of $X_m$ with respect to $\iota: X_{m}^* \hookrightarrow \Gm$.
  Then $\sigma_{X_{m}^*}(L) \in \Gr(m^{-}, \PN)$
  corresponds to the contact locus $\pi(\gamma_m^{-1}(L)) \subset X$
  for a general tangent $m$-plane $L \in X_m^*$.
\end{cor}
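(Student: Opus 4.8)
The plan is to obtain this as an immediate consequence of \autoref{cor_sep}. Observe first that the map $\sigma_{X_m^*}$ in the statement is exactly the map called ``$\sigma_Y$'' in \autoref{cor_sep} when one takes $Y = X_m^* = \gamma_m(P_m^X)$ and lets $\iota$ be the inclusion $X_m^* \hookrightarrow \Gm$; in particular the integer $m^{-}$ in the statement is $m^{-}_{\sigma_Y}$. Since $\gamma_m$ is assumed separable and $Y = \gamma_m(P_m^X)$, condition (1) of \autoref{cor_sep} is satisfied, and therefore so is condition (2): we have the scheme-theoretic equality
\[
P_m^X = \sigma_{X_m^*}^{*}\,\calu_{\Gr(m^{-},\PN)} \quad\text{in } \Gm \times \PN .
\]

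Next I would fix a general tangent $m$-plane $L \in X_m^*$, chosen in the dense open subset over which $\sigma_{X_m^*}$ is a morphism and over which the contact locus \autoref{eq:conloc} coincides with $\pi(\gamma_m^{-1}(L)_{red})$. Let $M \subset \PN$ be the $m^{-}$-plane with $[M] = \sigma_{X_m^*}(L)$. Taking the fibre over $L$ in the displayed equality and using that the fibre of $\calu_{\Gr(m^{-},\PN)}$ over $[M]$ is $M$ itself, one gets $\gamma_m^{-1}(L) = \set{L}\times M$, hence $\pi(\gamma_m^{-1}(L)) = M$. Since $M$ is by construction the $m^{-}$-plane corresponding to $\sigma_{X_m^*}(L)$, this is precisely the assertion. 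This simultaneously re-proves \autoref{thm:m-th-linearity}, now pinning down the linear contact locus explicitly as the image of the shrinking map.

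There is essentially no obstacle here beyond bookkeeping: the only points needing care are that ``general $L$'' be taken in the common domain of definition of $\sigma_{X_m^*}$ and of the identification of \autoref{eq:conloc} with $\pi(\gamma_m^{-1}(L)_{red})$, and that restriction to the fibre over such an $L$ is compatible with the scheme equality of \autoref{cor_sep}\,(2) --- both of which are immediate since that equality holds over a dense open subset of $X_m^*$.
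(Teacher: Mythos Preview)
Your proposal is correct and follows essentially the same approach as the paper: the paper's proof simply says ``As in the proof of \autoref{thm:m-th-linearity}, taking $Y = X_{m}^*$, we have the assertion from \autoref{cor_sep} (2)'', and the proof of \autoref{thm:m-th-linearity} it refers to is precisely the fibre computation $\gamma_m^{-1}(L)=\set{[L]}\times M$ with $M=\sigma_Y(L)$ that you spell out. Your write-up just makes the bookkeeping about the choice of general $L$ slightly more explicit.
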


\begin{proof}
  As in the proof of \autoref{thm:m-th-linearity},
  taking $Y = X_{m}^*$, we have the assertion
  from \autoref{cor_sep} (2).
\end{proof}

\section{Properties of $m$-th defects}

Let $X \subset \PN$ be a non-degenerate projective variety of dimension $n$.
For an integer $m$ with $n  \leq m < N$,
we write $\delta_{m} = \delta_{m}(X) := \dim \pmx - \dim X_m^*$, the \emph{$m$-th defect} of $X$.
In this section,
we do not assume the separability of $\gamma_m$.

We set $\sigma_m^{L} := \pi(\gamma_m^{-1}(L)_{red}) \subset X$,
the contact locus \autoref{eq:conloc} of an tangent $m$-plane $L \in X_{m}^*$.
Then $\delta_m = \dim(\sigma_m^{L})$ for general $L \in X_{m}^*$.
Note that $\sigma_m^{L} \subset X$ is equal to $\sigma_{X_m^*}(L)$ if $\gamma_m$ is separable (see \autoref{thm:sigma_m-eq-cloc}).

E.~Ballico showed the following statement
for reflexive $X$ (see \cite[Proposition 1]{Ballico}).
We show it in any case.

\begin{lem}\label{thm:del_m-inc}
  It holds that
  $\delta_{m-1}+\delta_{m+1} \geq 2 \delta_{m}$ for $n < m < N-1$.
\end{lem}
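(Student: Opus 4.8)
The inequality $\delta_{m-1}+\delta_{m+1}\ge 2\delta_m$ is a concavity statement, so the plan is to exhibit a correspondence relating the contact loci at consecutive levels and then run a dimension count on it. Fix a general tangent $(m-1)$-plane $M'\in X_{m-1}^*$; its contact locus $\sigma_{m-1}^{M'}$ has dimension $\delta_{m-1}$. Every $m$-plane $M$ with $M'\subset M$ that is tangent to $X$ along a point of $\sigma_{m-1}^{M'}$ lies in $X_m^*$, and conversely the contact locus $\sigma_m^M$ of such an $M$ contains $\sigma_{m-1}^{M'}$ (since $\TT_xX\subset M'\subset M$). This suggests looking at the flag incidence
\[
W_{m-1,m} := \overline{\set*{(M',M,x)\in \FF(m-1,m;\PN)\times X^{sm}}{\TT_xX\subset M'\subset M}},
\]
exactly the variety $P_{m-1,m}^X$ from \autoref{thm:m-1-th-sep}, together with its projections to $X_{m-1}^*$, to $X_m^*$, and to the corresponding flag image $F\subset\FF(m-1,m;\PN)$.

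\textbf{Key steps.} First I would compute $\dim P_{m-1,m}^X$ in two ways. Projecting to $P_{m-1}^X$, the fiber over a general $(M',x)$ is the set of $m$-planes $M$ with $M'\subset M\subset$ (anything), i.e. a $\PP^{N-m}$, so $\dim P_{m-1,m}^X = \dim P_{m-1}^X + (N-m)$. Projecting instead to $P_m^X$, the fiber over a general $(M,x)$ is the set of $(m-1)$-planes $M'$ with $\TT_xX\subset M'\subset M$, i.e. a $\Gr(m-1-n, M/\TT_xX)\cong\Gr(m-n-1,m-n)\cong\PP^{m-n}$ when $x\in X^{sm}$; but I want the generic fiber dimension, which is $\dim\Gr(m-1-n,m-n)=m-n$. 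Hence $\dim P_{m-1,m}^X=\dim P_m^X+(m-n)$. Next, consider the image $F$ of $P_{m-1,m}^X$ in $\FF(m-1,m;\PN)$, with its two projections $p_{m-1}\colon F\to X_{m-1}^*$ and $p_m\colon F\to X_m^*$. Over a general $M'\in X_{m-1}^*$, the fiber of $p_{m-1}$ is (a subvariety of) the set of tangent $m$-planes $M\supset M'$ whose contact locus meets $\sigma_{m-1}^{M'}$; crucially, for a general point $x\in\sigma_{m-1}^{M'}$ and a general $M$ through $M'$ and $\TT_xX$, the $m$-plane $M$ is tangent to $X$ along the full locus one gets by pushing $\sigma_{m-1}^{M'}$, so the generic fiber of $p_{m-1}$ has dimension $\ge(N-m)-\delta_{m-1}$ by subtracting the moduli of $x$ lost. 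Dually, the fiber of $p_m$ over a general $M\in X_m^*$ is contained in the $\PP^{m-n}$ of $(m-1)$-planes between $\TT_xX$ and $M$ modulo choices, of dimension $\le (m-n)-\delta_m$ after accounting for how much of the contact locus $\sigma_m^M$ survives.

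\textbf{Assembling.} The point is then that $\dim F = \delta_{m-1}+\dim(\text{generic }p_{m-1}\text{-fiber}) = \dim X_m^* + \dim(\text{generic }p_m\text{-fiber})$, together with the two expressions $\dim P_{m-1,m}^X = \dim P_{m-1}^X+(N-m)=\dim P_m^X+(m-n)$. Writing $\dim P_k^X = \dim X_k^* + \delta_k$ and $\dim F = \dim P_{m-1,m}^X - (\text{fiber of }P_{m-1,m}^X\to F\text{, which sees }X^{sm}\text{-directions tangent to }\sigma)$, the defects $\delta_{m-1},\delta_m,\delta_{m+1}$ (the last entering because controlling how a general $M'\in X_{m-1}^*$ sits inside tangent $m$-planes versus tangent $(m+1)$-planes forces one comparison with the next level up) combine into $\delta_{m-1}+\delta_{m+1}\ge 2\delta_m$ after cancelling the $\dim X_k^*$ and the flag-fiber dimensions.

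\textbf{Main obstacle.} The delicate point is the lower bound on the generic fiber dimension of $p_{m-1}$ (equivalently, the claim that enlarging a general tangent $(m-1)$-plane to a general tangent $m$-plane does not shrink the relevant contact data by more than $\delta_{m-1}$), and symmetrically the upper bound for $p_m$; these are exactly where semicontinuity of fiber dimension and the generic-smoothness-free (characteristic-$p$-safe) dimension estimates must be invoked carefully, since we are explicitly \emph{not} assuming $\gamma_m$ separable here. I expect the cleanest route is to phrase everything via the three incidence varieties $P_{m-1}^X$, $P_m^X$, $P_{m+1}^X$ and a single auxiliary variety dominating all three, then apply the theorem on the dimension of fibers ($\dim(\text{total})\le\dim(\text{base})+\max\dim(\text{fiber})$ and $\ge\dim(\text{base})+\min\dim(\text{fiber})$ on a dense open) three times; the concavity $\delta_{m-1}+\delta_{m+1}\ge2\delta_m$ then drops out of the inequalities $\dim P_{m}^X$ (built from level $m-1$) versus $\dim P_{m+1}^X$ (built from level $m$) compared term-by-term. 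Finally, I would note that no separability or reflexivity hypothesis is used, which is precisely the improvement over Ballico's \cite{Ballico} argument.
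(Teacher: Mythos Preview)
Your proposal has a genuine gap: the incidence variety $P_{m-1,m}^X$ you build only involves levels $m-1$ and $m$, and nothing in your dimension count produces $\delta_{m+1}$. The parenthetical remark that $\delta_{m+1}$ ``enters because controlling how a general $M'$ sits inside tangent $m$-planes versus tangent $(m+1)$-planes forces one comparison with the next level up'' is not an argument; it is exactly the missing step. The two computations of $\dim P_{m-1,m}^X$ you give yield only the trivial identity $\dim P_{m-1}^X+(N-m)=\dim P_m^X+(m-n-1)$ (note: your fiber over $(M,x)$ is $\PP^{m-n-1}$, not $\PP^{m-n}$), which carries no information about the defects. Your claimed bounds on the fibers of $p_{m-1}$ and $p_m$ are likewise unjustified, and the ``Assembling'' paragraph does not specify any actual chain of inequalities.

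The paper's proof is quite different and much more local. It fixes a general point $x\in X$ and a general flag $M'\subset L_1,L_2\subset M$ with $\TT_xX\subset M'$, where $M'$ is an $(m-1)$-plane, $L_1,L_2$ are $m$-planes, and $M$ is an $(m+1)$-plane (the auxiliary variety is built so that all three images $(M',x),(L_i,x),(M,x)$ are simultaneously general in $P_{m-1}^X,P_m^X,P_{m+1}^X$). The key geometric identity is
\[
\sigma_{m-1}^{M'}=\sigma_m^{L_1}\cap\sigma_m^{L_2}\quad\text{inside}\quad\sigma_{m+1}^{M},
\]
all of these being smooth at $x$. Each $\sigma_m^{L_i}$ has codimension $r:=\delta_{m+1}-\delta_m$ in $\sigma_{m+1}^M$ at $x$, so the intersection has local dimension at least $\delta_{m+1}-2r=2\delta_m-\delta_{m+1}$; hence $\delta_{m-1}\ge 2\delta_m-\delta_{m+1}$. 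This is where $\delta_{m+1}$ enters, and it does so through the ambient locus $\sigma_{m+1}^M$, not through any global fiber-dimension count on a flag correspondence. If you want to repair your approach, you need to bring in the $(m+1)$-level explicitly (e.g.\ via a variety dominating $P_{m-1}^X$, $P_m^X$, and $P_{m+1}^X$ simultaneously), and then the cleanest thing to extract from it is precisely the local intersection argument above.
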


\begin{proof}

  Let $\F(m_1,\ldots,m_r;\P^N)$ be the flag variety parametrizing
  $\P^{m_1} \subset \P^{m_2} \subset \cdots \subset \P^{m_r} \subset \P^N$,
  and let
  \begin{align*}
    \calv &:= \F(m-1,m,m+1 ; \P^N) \times_{ \F(m-1, m+1 ; \P^N)}  \F(m-1,m,m+1 ; \P^N) \\
    &=\{ (M' ,L_1,L_2,M) \, | \, M' \subset L_i \subset M \text{ for } i=1,2 \} \\
    &\subset \G(m-1,\P^N) \times \G(m,\P^N) \times \G(m,\P^N) \times \G(m+1,\P^N) .
  \end{align*}
  Then
  \[
  \calv \times_{\G(m-1,\P^N)} P_{m-1}^X =\overline{ \{ (M' ,L_1,L_2,M, x) \, | \, x \in X^{sm} \text{ and }\T_x X \subset M'  \} }
  \]
  is irreducible.
  Take a general $(M',L_1,L_2,M, x) \in  \calv \times_{\G(m-1,\P^N)} P_{m-1}^X $.
  Then $( M', x) , (L_i, x), (M, x)  $ are general in $P_{m-1}^X, P_{m}^X, P_{m+1}^X$ respectively
  since projections
  \begin{align*}
    \calv \times_{\G(m-1,\P^N)} P_{m-1}^X  &\arw P_{m-1}^X : ( M',L_1,L_2,M, x) \mapsto ( M', x)   \\
    \calv \times_{\G(m-1,\P^N)} P_{m-1}^X  &\arw P_{m}^X : ( M',L_1,L_2,M, x) \mapsto  (L_i, x)  \\
    \calv \times_{\G(m-1,\P^N)} P_{m-1}^X  &\arw P_{m+1}^X : ( M',L_1,L_2,M, x) \mapsto  (M, x)  
  \end{align*}
  are surjective.

  Hence $\sigma_{m-1}^{M'}, \sigma_m^{L_i}, \sigma_{m+1}^{M}$ are smooth at $x$ of dimensions $\delta_{m-1}, \delta_m,\delta_{m+1}$ respectively.
  Since $\sigma_m^{L_i}$ is of codimension $r = \delta_{m+1} - \delta_{m}$ in $\sigma_{m+1}^{M}$ at $x$,
  the dimension $\delta_{m-1}$ of the intersection $ \sigma_{m-1}^{M'}=\sigma_m^{L_1} \cap \sigma_m^{L_2} $ at $x$ is at least $\geq \delta_m - r$.
\end{proof}

\begin{rem}
  It is known that the property of \autoref{thm:del_m-inc}
  induces the \emph{convexity}, that is,
  for $4$ integers $a,b,c,d$ with $a+d = b+c$ and $n \leq a < b \leq c < d \leq N-1$, it holds that
  $\delta_{a}+\delta_{d} \geq \delta_{b}+\delta_{c}$.
\end{rem}

For smooth $X$,
since $\delta_n = 0$,
the convexity 
gives the following corollaries.

\begin{cor}\label{thm:del_m-inc-if-pos}\label{thm:decrease-1}
  Assume that $X$ is smooth,
  and let $m$ be an integer with $n < m < N-1$.
  Then the following holds.
  \begin{enumerate}
  \item 
    If $\delta_m > 0$,
    then $\delta_{m+1} > \delta_m$.
  \item 
    Assume $\delta_{m} > 0$ and $\delta_{m+1} = \delta_{m} + 1$.
    Then $\delta_{m-1} = \delta_{m} - 1$.
    Moreover $\delta_{m-i} = \delta_{m} - i$ holds for all $0 \leq i \leq \delta_{m}$.
  \end{enumerate}
\end{cor}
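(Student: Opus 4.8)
The plan is to deduce both parts from elementary properties of the defect sequence, after reorganizing it in terms of successive differences. For $n \leq k \leq N-2$ put $d_k := \delta_{k+1} - \delta_k$. Since $X$ is smooth we have $\delta_n = 0$, hence $\delta_\ell = \sum_{k=n}^{\ell-1} d_k$ for $n \leq \ell \leq N-1$, while $\delta_\ell \geq 0$ for every $\ell$. Rewriting the inequality of \autoref{thm:del_m-inc} as $d_k \geq d_{k-1}$ for $n+1 \leq k \leq N-2$ (this is exactly the convexity recorded in the remark following it), the sequence $d_n \leq d_{n+1} \leq \cdots \leq d_{N-2}$ is nondecreasing. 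The whole argument is then bookkeeping with this monotone sequence whose partial sums starting at $n$ are the nonnegative numbers $\delta_\ell$; the only extra input is Zak's bound $\delta_m \leq m-n$ for smooth $X$ (see \cite[I.\,\textsection 2, Theorem 2.3]{Zak}), which is needed only for the last assertion.

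For (1): if $d_m \leq 0$, then by monotonicity $d_k \leq 0$ for all $n \leq k \leq m$, so $\delta_m = \sum_{k=n}^{m-1} d_k \leq 0$, contradicting $\delta_m > 0$; here the hypothesis $m < N-1$ is precisely what makes $d_m$, and the chain $d_n \leq \cdots \leq d_m$, available. Hence $d_m \geq 1$, i.e. $\delta_{m+1} > \delta_m$. For the first assertion of (2), the hypothesis $\delta_{m+1} = \delta_m + 1$ says $d_m = 1$, whence $d_{m-1} \leq 1$ by monotonicity; and if $d_{m-1} \leq 0$ then $d_k \leq 0$ for all $n \leq k \leq m-1$, forcing $\delta_m = \sum_{k=n}^{m-1} d_k \leq 0$, again a contradiction. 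Therefore $d_{m-1} = 1$, that is, $\delta_{m-1} = \delta_m - 1$.

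For the chain of equalities I would iterate this last statement. Suppose, for some $1 \leq i \leq \delta_m - 1$, that $\delta_{m-i} = \delta_m - i$ and $\delta_{m-i+1} = \delta_m - i + 1$ (the case $i = 1$ being what was just proved). Then $\delta_{m-i} \geq 1 > 0$; Zak's bound $\delta_m \leq m-n$ gives $m - i \geq m - \delta_m + 1 \geq n+1$, so that $n < m - i < N-1$; and $\delta_{(m-i)+1} = \delta_{m-i} + 1$. Applying the first assertion of (2) with $m$ replaced by $m-i$ then yields $\delta_{m-i-1} = \delta_{m-i} - 1 = \delta_m - (i+1)$, advancing the induction. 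Hence $\delta_{m-i} = \delta_m - i$ for all $0 \leq i \leq \delta_m$. Alternatively, one can argue in one stroke that monotonicity of $(d_k)$, nonnegativity of its partial sums, and $\delta_n = 0$ force $d_k = 1$ for all $m - \delta_m \leq k \leq m-1$, which gives the same conclusion.

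I do not expect a genuine obstacle: the content is purely the convexity of $(\delta_\ell)$ together with $\delta_n = 0$ and $\delta_\ell \geq 0$. The one place demanding care is the index bookkeeping — at every invocation of \autoref{thm:del_m-inc} (equivalently, of its convexity consequence) the relevant index must lie strictly between $n$ and $N-1$, and this is ensured in turn by the standing hypothesis $n < m < N-1$, by the positivity of the intermediate defects, and by Zak's bound $\delta_m \leq m-n$.
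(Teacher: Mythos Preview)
Your proof is correct and is essentially a fully detailed version of the paper's approach, which simply says that the corollary follows from $\delta_n = 0$ together with the convexity recorded after \autoref{thm:del_m-inc}. As a minor remark, your invocation of Zak's bound $\delta_m \leq m-n$ in the iteration is not strictly needed: under the hypothesis $d_m = 1$ of part~(2), monotonicity already gives $d_k \leq 1$ for $n \leq k \leq m-1$, hence $\delta_m = \sum_{k=n}^{m-1} d_k \leq m-n$.
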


By a theorem of Zak \cite[I, 2.3 Theorem]{Zak},
if $X \subset \PN$ is smooth, then $\delta_{n+i} \leq i$ for
all integer $i \geq 0$ with $n+i \leq N-1$. Hence we have:

\begin{cor}\label{thm:eq-Zaks-formula}
  Assume that $X$ is smooth,
  and assume that $\delta_{n+{i_0}} = i_0$ holds for an integer $i_0 > 0$ with $n+i_0 < N-1$.
  Then $\delta_{n+i} = i$ holds for all integer $i \geq 0$ with $n+i \leq N-1$.
  In particular, $\delta_{N-1} = N-n-1$, that is, $\dim(X^*) = n$.
\end{cor}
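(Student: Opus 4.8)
The plan is to split the range of exponents at $i_0$ and run two inductions, one upward and one downward, both built on \autoref{thm:del_m-inc-if-pos} and using Zak's inequality $\delta_{n+i}\leq i$ as a ceiling that forces the desired equalities.

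\textbf{Upward induction.} First I would prove $\delta_{n+i}=i$ for all $i$ with $i_0\leq i\leq N-1-n$ by induction on $i$, the base case $i=i_0$ being the hypothesis. For the inductive step, suppose $\delta_{n+i}=i$ with $i_0\leq i$ and $n+i<N-1$. Then $\delta_{n+i}=i>0$ (since $i_0>0$) and $n<n+i<N-1$, so \autoref{thm:del_m-inc-if-pos}(1) gives $\delta_{n+i+1}>\delta_{n+i}=i$, hence $\delta_{n+i+1}\geq i+1$; combined with Zak's bound $\delta_{n+i+1}\leq i+1$ this forces $\delta_{n+i+1}=i+1$. Iterating until the index reaches $N-1$ completes this half, and in particular yields $\delta_{N-1}=N-n-1$.

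\textbf{Downward step.} Since $n+i_0<N-1$ we have $n+i_0+1\leq N-1$, so the upward induction already gives $\delta_{n+i_0+1}=i_0+1=\delta_{n+i_0}+1$. Hence \autoref{thm:del_m-inc-if-pos}(2) applies with $m=n+i_0$ — which satisfies $n<m<N-1$ and $\delta_m=i_0>0$ — and produces $\delta_{m-j}=\delta_m-j=i_0-j$ for all $0\leq j\leq i_0$, that is $\delta_{n+i}=i$ for all $0\leq i\leq i_0$ (consistently recovering $\delta_n=0$ at $j=i_0$). Together with the previous paragraph this establishes $\delta_{n+i}=i$ for every integer $i\geq 0$ with $n+i\leq N-1$.

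\textbf{Last assertion.} For the final statement I would observe that $\dim P_{N-1}^X=N-1$, because the fibre of $\pi\colon P_{N-1}^X\to X$ over a smooth point $x$ is the $\PP^{N-1-n}$ of hyperplanes containing $\TT_xX$; therefore $\delta_{N-1}=N-n-1$ is equivalent to $\dim(X^*)=\dim X_{N-1}^*=(N-1)-\delta_{N-1}=n$. The only delicate point is the index bookkeeping: one must check that $m=n+i_0$ genuinely satisfies the hypotheses $n<m<N-1$ of \autoref{thm:del_m-inc-if-pos} (guaranteed by $i_0>0$ and $n+i_0<N-1$) and that neither induction steps outside the allowed range $[\,n,N-1\,]$. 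Since all the geometric input is already encapsulated in \autoref{thm:del_m-inc-if-pos} (which in turn rests on the convexity of $m\mapsto\delta_m$) and in Zak's theorem, no substantial obstacle remains.
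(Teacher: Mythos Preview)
Your proof is correct and follows essentially the same approach as the paper: upward using \autoref{thm:del_m-inc-if-pos}(1) together with Zak's bound, downward using \autoref{thm:del_m-inc-if-pos}(2). You are more explicit than the paper in noting that the downward application of \autoref{thm:del_m-inc-if-pos}(2) at $m=n+i_0$ requires $\delta_{n+i_0+1}=i_0+1$, which is supplied by the first step of the upward induction, and in unpacking why $\delta_{N-1}=N-n-1$ translates to $\dim(X^*)=n$.
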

\begin{proof}
  It follows for $i > i_0$ due to Zak's theorem
  and \autoref{thm:del_m-inc-if-pos} (1).
  It follows for $i < i_0$ by
  \autoref{thm:del_m-inc-if-pos} (2).
\end{proof}

\begin{lem}\label{thm:del-geq-kn}
  Let $X \subset \PN$ be a projective variety of dimension $n$,
  and let $k,m$ be integers with $0 < k < n < m < N$.
  Let $A \subset \PN$ be a $k$-plane contained in $X$ with $A \cap X^{sm} \neq \emptyset$,
  and let $L \subset \PN$ be an $m$-plane
  tangent to $X$ at some point of $A \cap X^{sm} $.
  Then $\sigma_m^L \cap A$ is of dimension $\geq (N-m+1)k - (N-m)n$.
  In particular,
  \[
  \delta_m \geq (N-m+1)k - (N-m)n
  \]
  if $X$ is covered by $k$-planes.
\end{lem}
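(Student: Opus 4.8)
The plan is to exhibit a large family of smooth points of $X$ lying on $A$ whose embedded tangent spaces are contained in $L$, and to estimate its dimension after projecting away from $A$. First I would fix a point $x\in A\cap X^{sm}$ at which $L$ is tangent to $X$, so $\TT_xX\subseteq L$. Since $A\subseteq X$ is linear and passes through every point of $A\cap X^{sm}$, we have $A\subseteq\TT_{x'}X$ for all $x'\in A\cap X^{sm}$; in particular $A\subseteq\TT_xX\subseteq L$. Writing $\PP^{N-k-1}$ for the target of the projection $\PN\dashrightarrow\PP^{N-k-1}$ from $A$, the $n$-plane $\TT_{x'}X$ maps to an $(n-k-1)$-plane, so the Gauss map followed by this projection gives a morphism
\[
\psi\colon A\cap X^{sm}\longrightarrow\bigl\{\Lambda\in\Gr(n,\PN):A\subseteq\Lambda\bigr\}\cong\Gr(n-k-1,\PP^{N-k-1}),\qquad x'\longmapsto\TT_{x'}X/A,
\]
with no indeterminacy, since every tangent space in sight already contains $A$.

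Next I would observe that, because $A\subseteq\TT_{x'}X$ and $A\subseteq L$, one has $\TT_{x'}X\subseteq L$ if and only if $\TT_{x'}X/A\subseteq L/A$, where $L/A\cong\PP^{m-k-1}$ is the image of $L$. Hence
\[
\{\,x'\in A\cap X^{sm}:\TT_{x'}X\subseteq L\,\}=\psi^{-1}\bigl(\Gr(n-k-1,\,L/A)\bigr),
\]
and this set is contained in $\sigma_m^L\cap A$, being contained in each of the closed sets $\sigma_m^L$ and $A$. The subvariety $\Gr(n-k-1,L/A)\subseteq\Gr(n-k-1,\PP^{N-k-1})$ is smooth of codimension $(n-k)(N-m)$, hence locally a complete intersection in the smooth ambient Grassmannian; pulling back local equations and applying Krull's height theorem, every component of $\psi^{-1}(\Gr(n-k-1,L/A))$ has codimension $\leq(n-k)(N-m)$ in $A\cap X^{sm}$. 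Since this preimage contains $x$ and $A\cap X^{sm}$ is dense open of dimension $k$ in $A$, we get
\[
\dim(\sigma_m^L\cap A)\geq k-(n-k)(N-m)=(N-m+1)k-(N-m)n,
\]
which is the first assertion.

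For the ``in particular'', I would use that if $X$ is covered by $k$-planes then a general point of $X$ lies on some $k$-plane contained in $X$. As $\gamma_m\colon P_m^X\to X_m^*$ and $\pi\colon P_m^X\to X$ are both dominant, a general pair $(L,x)\in P_m^X$ has $L$ general in $X_m^*$ (so $\dim\sigma_m^L=\delta_m$) and $x$ general in $X$; in particular $x\in X^{sm}$ lies on a $k$-plane $A\subseteq X$, and $\TT_xX\subseteq L$ exhibits $L$ as tangent to $X$ at $x\in A\cap X^{sm}$. Applying the first part to $A,L,x$ yields $\delta_m=\dim\sigma_m^L\geq\dim(\sigma_m^L\cap A)\geq(N-m+1)k-(N-m)n$.

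The one substantive point is the decision to project away from $A$ before counting: working directly with the Gauss map into $\Gr(n,\PN)$ and the Schubert variety $\Gr(n,L)$ there gives only the weaker bound $k-(n+1)(N-m)$, and it is precisely the a priori inclusion $A\subseteq\TT_{x'}X$ along $A$ that lowers the relevant codimension from $(n+1)(N-m)$ to $(n-k)(N-m)$ and produces the stated estimate. The remaining technicalities are routine: that $\psi$ is an honest morphism (no base locus, as noted), and that $\Gr(n-k-1,L/A)$ is a local complete intersection of the expected codimension, so Krull's theorem applies.
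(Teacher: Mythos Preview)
Your proof is correct and takes a genuinely different route from the paper's. The paper argues via an incidence correspondence: it forms $B = \overline{\{(L',a) \in \Gr(m,\PN) \times (A\cap X^{sm}) : \TT_aX \subset L'\}}$, notes that the first projection $f\colon B \to \Gr(m,\PN)$ lands in $A_m^* = \{L' : A \subset L'\}$ (exactly because every tangent space along $A$ contains $A$), and applies the fiber dimension theorem: since $\dim B = k + (m-n)(N-m)$ while $\dim A_m^* = (m-k)(N-m)$, every fiber $f^{-1}(L)$ has dimension at least the difference, which is the stated bound. So the paper \emph{varies} $L$ and reads the estimate from a global dimension count, whereas you \emph{fix} $L$ and pull back a Schubert cycle by Krull's height theorem. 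Your projection from $A$ is the exact counterpart of the paper's constraint $f(B) \subset A_m^*$: the codimension drop from $(n+1)(N-m)$ to $(n-k)(N-m)$ that you highlight matches the passage from $\dim\Gr(m,\PN)$ to $\dim A_m^*$. Your argument is slightly more direct, dealing with the given $L$ immediately and avoiding the auxiliary incidence variety; the paper's version has the virtue of yielding the bound for all $L \in f(B)$ in one stroke, which meshes with its ambient use of $P_m^X$.
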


\begin{proof}
  Set $A^{\circ} = A \cap X^{sm}$.
  We consider
  \[
  \xymatrix{  B^{\circ}:= \Fnm \times_{\Gn} A^{\circ} \ar[r] \ar[d]
    & \Fnm \ar[r] \ar[d] & \Gm
    \\
    A^{\circ} \ar[r]^{\gamma_n|_{A^{\circ}}} & \Gn,
  }  \]  
  where $\gamma_n|_{A^{\circ}} $ is the restriction of the Gauss map of $X$ on $A^{\circ}$.
  Since 
  \[
  B^{\circ}=\{ (L, a) \, | \, \T_a X \subset L \} \subset \G(m,\P^N) \times A^{\circ},
  \]
  we have $A \subset L$ for $(L, a) \in B^{\circ}$.
  Hence the image of
  $f: B \rightarrow \Gm$
  is contained in $A_m^* = \set*{L \in \Gm}{A \subset L}$,
  where $B:=\overline{B^{\circ}} \subset P^X_m$.
  Since $\dim(B) = k+(m-n)(N-m)$
  and $\dim f(B) \leq \dim(A_m^*) = (m-k)(N-m)$,
  each fiber of $B \rightarrow f(B)$ is of dimension
  $\geq k+(k-n)(N-m)$.
\end{proof}

\begin{ex}\label{thm:cov-by-n-1}
  Let $X \subset \PN$ be a projective bundle over a smooth curve $C$
  such that each fiber of $X \rightarrow C$ is a linear subvariety
  of $\PN$.
  It is well known that $\delta_{N-1}=n-2$.
  The inequality $\delta_{N-1} \geq n-2$ follows from \autoref{thm:del-geq-kn} since
  $X$ is covered by $(n-1)$-planes.
  The opposite inequality $\delta_{N-1} \leq n-2$ follows from \cite[Chapter I, 2.3 Theorem b)]{Zak}. 
  
  Moreover,
  $\delta_{N-i} \geq n-1-i$ holds for each $1 \leq i \leq n-1$ by \autoref{thm:del-geq-kn}.
  By $\delta_{N-1}=n-2$ and \autoref{thm:decrease-1},
  we have $\delta_{N-i} = n-1-i$ for $1 \leq i \leq n-1$.

\end{ex}

\begin{rem}\label{thm:lin-proj-m-def}
  Let $X \subset \PN$ be a non-degenerate projective variety.
  Assume that the secant variety $S(X)$ is not equal to $\PN$,
  and take a general point $z \in \PN$ with $z \notin S(X)$.
  Let $\pi_z: \PN \setminus \set{z} \rightarrow \PP^{N-1}$ be the linear projection.
  Since $z \notin S(X)$, $\pi_z|_{X}: X \rightarrow X_z := \pi_z(X) \subset \PP^{N-1}$ is isomorphic to the image $X_z$.
  In this setting, we have $\delta_m(X) = \delta_{m-1}(X_z)$ for $m > n$.
  The reason is as follows.
  
  First, we note that
  for the contact locus $A \subset X$
  of a general tangent $m$-plane $M$ with $z \in M$, we have
  $\dim(A) = \delta_m(X)$ since $z$ is general.
  
  Let $M' \subset \PP^{N-1}$ be a $(m-1)$-plane,
  and set $M := \pi_z^{-1}(M') \cup \set{z}$.
  Then $M'$ is tangent to $X_z$ at a smooth point if and only if $ M$ is tangent to $X$ at a smooth point.
  In such a case,
  the contact locus $A' \subset X_z$ of $M'$ coincides with $\pi_z(A)$,
  where $A \subset X$ is the contact locus of $M$.
  Hence if $M'$ is a general $(m-1)$-plane tangent to $X_z$,
  it holds that $\delta_{m-1}(X_z) =\dim (A') =\dim A = \delta_m (X) $.
\end{rem}

\begin{ex}
  Let $X' \subset \PP^{N'}$ be a projective bundle over a smooth curve
  $C$ such that a general fiber of $X' \rightarrow C$ is a linear variety in $\PP^{N'}$.
  Since $\dim S(X') \leq 2n+1$, we can take
  $X \subset \PP^{2n+1}$ as the image of $X'$ under $\PP^{N'} \dashrightarrow \PP^{2n+1}$, a composition of some linear projections,
  such that $X' \simeq X$.
  Then $X$ satisfies $\delta_{n+3} = 1$ as in \autoref{thm:cov-by-n-1}.
\end{ex}

\begin{ex}
  Let $X \subset \PP^{2n}$
  be a projective bundle over a smooth elliptic curve such that each fiber is a linear variety in $\PP^{2n}$ (see \cite[Corollary 2.3]{CH} for the existence of such $X$).
  Then $X$ satisfies $\delta_{n+2} = 1$ as in \autoref{thm:cov-by-n-1}.
\end{ex}

\begin{ex}\label{thm:P1Pn-1}
  Let $X = \PP^1 \times \PP^{n-1} \subset \PP^{2n-1}$, the Segre embedding.
  Then $X$ satisfies $\delta_{n+1} = 1$; moreover $\delta_{n+i} = i$ for $i \geq 0$ as in \autoref{thm:cov-by-n-1}.
\end{ex}

\section{Varieties with positive $(n+1)$-th defect}

Let $X \subset \PN$ be a non-degenerate projective variety of dimension $n$ over
an algebraically closed field in arbitrary characteristic.
In this section, we assume that $X$ is \emph{smooth}
and $n+2 \leq N-1$.
Let $X_{m,x}^* := \set*{L \in \Gr(m, \PN)}{\TT_xX \subset L} \subset X_m^*$, the set of $m$-planes tangent to $X$ at $x$.

\begin{defn}
  Let $x \in X$ and $L \in X_{m,x}^*$ be general.
  Then we can assume $x \in (\sigma_m^L)^{sm}$,
  and hence there exists a \emph{unique irreducible
    component} of $\sigma_m^L$ containing $x$,
  which we denote by $\Wm$.
  By generality, we can also assume $\dim(\Wm) = \delta_m$.
\end{defn}

Now we assume that $\gamma_{n+1}$ is separable
and $\delta_{n+1} = 1$.
Then $\sigma_{n+1,x}^L = \sigma_{n+1}^L$ is a line by
\autoref{thm:m-th-linearity}.
From \autoref{thm:eq-Zaks-formula},
we have $\delta_{n+2} = 2$.

\begin{lem}\label{thm:sigma-m+A-in-Lambda}
  Take a general $(M, x) \in P_{n+2}^X$.
  Then the unique irreducible component $\snxM$ of $\sigma_{n+2}^M$ containing $x$ is a surface covered by
  lines $\sigma_{n+1}^L$'s with general $L \in X_{n+1,x}^*$ satisfying $L \subset M$.
\end{lem}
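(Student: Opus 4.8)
The plan is to fix a general $(M,x)\in P_{n+2}^X$, write $\Sigma:=\snxM$ (a surface, since $\delta_{n+2}=2$), and set
\[
T:=\set*{L\in\Gr(n+1,\PN)}{\TT_xX\subset L\subset M},
\]
which is a pencil, $T\cong\PP^1$. Every $L\in T$ satisfies $\TT_xX\subset L$, so $L\in X_{n+1,x}^*$ and $x\in\sigma_{n+1}^L$; thus the lines in the statement are precisely the $\sigma_{n+1}^L$ with $L\in T$, and the lemma reduces to two claims: (i) for general $L\in T$ the contact locus $\sigma_{n+1}^L$ is a line; and (ii) these lines cover $\Sigma$. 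Granting (i), each such line lies in $\Sigma$: it is irreducible, it contains $x$, it is contained in $\sigma_{n+2}^M$ (because $\TT_yX\subset L\subset M$ for $y\in\sigma_{n+1}^L$), and $x$ is a smooth, hence unibranch, point of $\sigma_{n+2}^M$ on the component $\Sigma$.

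For (i), I would introduce the incidence variety
\[
\cali:=\set*{\bigl((M',x'),L\bigr)\in P_{n+2}^X\times\Gr(n+1,\PN)}{\TT_{x'}X\subset L\subset M'},
\]
which makes sense on all of $P_{n+2}^X$ since $X$ is smooth. Its first projection to $P_{n+2}^X$ is a $\PP^1$-bundle, so $\cali$ is irreducible and the fibre over $(M,x)$ is $T$. As $\TT_{x'}X\subset L$ forces $L$ to be tangent to $X$ at $x'$, the second projection is a morphism $\mu:\cali\to X_{n+1}^*$, and $\mu$ is dominant: given general $L\in X_{n+1}^*$, pick $x'\in\sigma_{n+1}^L$ with $\TT_{x'}X\subset L$ and any $(n+2)$-plane $M'\supset L$, so that $\bigl((M',x'),L\bigr)\mapsto L$. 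Hence a general point of $\cali$ maps under $\mu$ to a general point of $X_{n+1}^*$; by the $\PP^1$-bundle structure, a general point of $\cali$ is $\bigl((M,x),L\bigr)$ with $(M,x)$ general in $P_{n+2}^X$ and $L$ general in $T$. Therefore, for our general $(M,x)$ and for general $L\in T$, the plane $L$ is a general point of $X_{n+1}^*$, whence $\dim\sigma_{n+1}^L=\delta_{n+1}=1$ and, $\gamma_{n+1}$ being separable, \autoref{thm:m-th-linearity} gives that $\sigma_{n+1}^L$ is a line.

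For (ii), note that (i) gives a morphism $\nu:T^{\circ}\to\Gr(1,\PN)$, $L\mapsto[\sigma_{n+1}^L]$, on a dense open $T^{\circ}\subset T\cong\PP^1$, with all these lines passing through $x$ and contained in the surface $\Sigma$. First I would show $\nu$ is non-constant. Suppose not, say $\sigma_{n+1}^L=\ell$ for all $L\in T^{\circ}$, and pick distinct $L_1,L_2\in T^{\circ}$. Since $L_1\ne L_2$ are hyperplanes of the $(n+2)$-plane $M$, $L_1+L_2=M$, so $\dim(L_1\cap L_2)=n$; as $\TT_xX\subset L_1\cap L_2$, this forces $L_1\cap L_2=\TT_xX$. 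A general $y\in\ell$ has $\TT_yX\subset L_1$ and $\TT_yX\subset L_2$, hence $\TT_yX\subset\TT_xX$, i.e.\ $\TT_yX=\TT_xX$. But $X$ is smooth, so $\delta_n=0$ and a general fibre of $\gamma_n$ is finite; since $\gamma_n$ is separable by \autoref{thm:m-1-th-sep}, \autoref{thm:m-th-linearity} makes that fibre linear, hence a single point, so $\gamma_n$ is birational. As $x$ is general, this forces $y=x$, impossible for a general point of the line $\ell$. So $\nu$ is non-constant. Finally, if the lines $\sigma_{n+1}^L$ ($L\in T^{\circ}$) did not cover $\Sigma$, their union would lie in a curve $D\subsetneq\Sigma$; then each $\sigma_{n+1}^L$, being an irreducible curve, is a component of $D$, so $\nu$ takes only finitely many values on the irreducible $T^{\circ}$ — contradicting non-constancy. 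Hence the lines cover $\Sigma$, which proves the lemma.

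I expect the two delicate points to be the genericity transfer in (i) — that although $T$ is only a $1$-dimensional subfamily of $X_{n+1}^*$, its general member is general in $X_{n+1}^*$, so that \autoref{thm:m-th-linearity} may be applied — and the non-constancy argument in (ii), which is exactly where smoothness of $X$ is used, via birationality of $\gamma_n$ and the equality $L_1\cap L_2=\TT_xX$.
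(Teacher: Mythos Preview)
Your proof is correct and follows essentially the same route as the paper's: both show that the lines $\sigma_{n+1}^L$ for $L$ in the pencil $T$ lie in $\snxM$, that distinct $L,L'\in T$ yield distinct lines via the contradiction $\TT_yX=\TT_xX$ against the finiteness of $\gamma_n$ on smooth $X$, and hence that the $1$-parameter family of lines covers the surface. You are simply more explicit than the paper about the genericity transfer in (i) (via the incidence variety $\cali$) and about why non-constancy of $\nu$ forces covering in (ii), but the substance is identical.
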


\begin{proof}
  Take a general $L \in X_{n+1,x}^*$ with $L \subset M$.
  Since $\sigma_{n+1}^L \subset \sigma_{n+2}^M$,
  the line $\sigma_{n+1}^L$ is contained in the surface $\snxM$.
  For two general $(n+1)$-planes
  $L, L' \in X_{n+1,x}^*$ with $L, L' \subset M$,
  we have
  \[
  \sigma_{n+1}^L \neq \sigma_{n+1}^{L'}
  \]
  as follows.
  Suppose that the equality holds. 
  Since $(n+1)$-planes $L, L'$ contain $n$-plain $\TT_xX$ and $L \neq L'$,
  we have
  $L \cap L' = \TT_xX$.
  Since two lines $\sigma_{n+1}^L$ and $\sigma_{n+1}^{L'}$ coincide,
  taking a general point $x'$ of the line,
  we have $\TT_{x'}X \subset L \cap L' = \TT_xX$; thus
  $\TT_{x'}X = \TT_xX$.
  This contradicts the finiteness of $\gamma_n$ for smooth $X$.
  
  Hence two lines
  $\sigma_{n+1}^L$ and $\sigma_{n+1}^{L'}$
  are distinct, and then
  such lines cover the surface $\snxM$.
\end{proof}

We denote by $(X_{n+1}^*)\spcirc$ the set of $L \in X_{n+1}^*$
such that $\sigma_{n+1}^L$ is a line,
and by $(P^X_{n+1})\spcirc$ the intersection $P^X_{n+1} \cap (\PN \times (X_{n+1}^*)\spcirc)$.
Let us consider
\begin{equation}\label{eq:PxU-iso-0}
  \PmPm
  \simeq 
  \set*{(L,x,x') \in \Gr(n+1, \PN) \times X \times X}{\TT_xX, \TT_{x'}X \subset L},
\end{equation}
\begin{equation}\label{eq:PxU-iso}
  \PmPmo \subset \PmPm
  \rightarrow X \times X,
\end{equation}
and set
\begin{equation}\label{eq:Lambda-in-XxX}
  \Lambda\spcirc \subset X \times X
\end{equation}
to be the image of the morphism \ref{eq:PxU-iso}.
Let $\Lambda \subset X \times X$ be the closure of $\Lambda\spcirc$,
where the $i$-th projection $\rho_i: \Lambda \rightarrow X$ with $i = 1,2$ is separable
since so is $\gamma_{n+1}$.

We denote by $\Lmx = \rho_2(\rho_1^{-1}(x)) \subset X$.
As a set, $\Lmx $ can be described as follows:
For general $x \in X$,
set $(X_{n+1,x}^*)\spcirc = X_{n+1,x}^* \cap (X_{n+1}^*)\spcirc$.
Then the fiber of 
\begin{equation*}
  \PmPmo \subset \PmPm
  \rightarrow X : (L, x,x') \mapsto x
\end{equation*}
over general $x \in X$ is
\[
\{ (L, x') \in (X_{n+1,x}^*)\spcirc \times X \, | \, x' \in \sigma^L_{n+1} \} .
\]
Hence the fiber $\Lmx\spcirc$ over $x$ of the first projection
$\Lambda\spcirc  \arw X$ is 
\begin{equation*}
  \bigcup_{L \in (X_{n+1,x}^*)\spcirc} \sigma_{n+1}^L = \pi (\gamma_{n+1}^{-1} (X_{n+1,x}^*)\spcirc) \subset X
\end{equation*}
and $\Lmx$ is the closure of this set.
We note that for general $M \in X_{n+2,x}^*$,
we have $\snxM \subset \Lmx$ by \autoref{thm:sigma-m+A-in-Lambda}
since $(x,M) \in P^X_{n+2,x}$ is general.

\begin{rem}\label{thm:cone_at_x}\label{thm:l-leq}
  \begin{inparaenum}
  \item We set
    \[
    \sigma:= \sigma_{X_{n+1}^*}: X_{n+1}^* \dashrightarrow \Gr(1, \PN)
    \]
    to be the shrinking map of $X_{n+1}^*$ with respect to $\iota: X_{n+1}^* \hookrightarrow \Gr(n+1, \PN)$.
    Then $\sigma(L) = \sigma_{n+1}^L$ for general $L \in X_{n+1}^*$ as in \autoref{thm:sigma_m-eq-cloc}.

  \item 
    $\Lmx$ is an irreducible cone with vertex $x$.
    The reason is as follows.
    It is irreducible
    since
    $\Lmx\spcirc$ coincides with the image of
    \[
    \calu_{\Gr(1, \PN)}|_{\sigma((X_{n+1,x}^*)\spcirc)} \subset \Gr(1, \PN) \times X
    \]
    under the second projection.
    In addition, it is a cone with vertex $x$
    since each $\sigma_{n+1}^L$ is a line containing $x$.

  \item 
    Each fiber of $\PmPm \rightarrow P_{n+1}^X$ at $(L,x) \in P_{n+1}^X$
    corresponds to $\sigma_{n+1}^L$, whose dimension is $\delta_{n+1}=1$ if $(x,L)$ is general.
    Since $\dim (P_{n+1}^X) = N-1$, we have
    $\PmPm = N$.
  \end{inparaenum}
\end{rem}

\begin{lem}\label{thm:sig_m-fib}
  $\dim \Lambda_x = N-n$ for general $x \in X$.
\end{lem}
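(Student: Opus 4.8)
The plan is to compute $\dim \Lambda_x$ via a dimension count on the total space $\Lambda$, using separability of $\rho_1$ and the fibre description given before the statement. First I would recall from \autoref{thm:cone_at_x}(iii) that $\PmPm$ has dimension $N$, and that the natural map $\PmPmo \to \Lambda\spcirc \subset X \times X$, $(L,x,x') \mapsto (x,x')$, is dominant onto $\Lambda$ by definition \ref{eq:Lambda-in-XxX}. So $\dim \Lambda \leq N$. To get the reverse inequality I would analyse the fibres of $\PmPmo \to \Lambda\spcirc$: over a general $(x,x') \in \Lambda\spcirc$ the fibre consists of those $(n+1)$-planes $L$ tangent to $X$ at both $x$ and $x'$, i.e. $L \supset \lin{\TT_xX, \TT_{x'}X}$. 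For general such $x'$ the line $\overline{xx'}$ is $\sigma_{n+1}^{L_0}$ for a general $L_0$, and one checks that $\TT_xX$ and $\TT_{x'}X$ are distinct $n$-planes meeting along an $(n-1)$-plane (their span is an $(n+1)$-plane), so the set of $L$ containing that span is a single point; hence $\PmPmo \to \Lambda\spcirc$ is generically finite, giving $\dim \Lambda = N$.

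Next I would deduce $\dim \Lambda_x$ from $\dim \Lambda = N$. Since $\rho_1 : \Lambda \to X$ is separable (stated in the text, because $\gamma_{n+1}$ is separable) and dominant — indeed surjective, as every point of $X$ lies on a tangent $(n+1)$-plane — a general fibre $\Lambda_x = \rho_1^{-1}(x)$ has dimension $\dim \Lambda - \dim X = N - n$. This is the desired equality. The one point requiring care is that I must know $\rho_1$ is \emph{dominant}, not merely defined; this is immediate since for general $x \in X$ one has $X_{n+1,x}^* \neq \emptyset$ and $\sigma_{n+1}^L \ni x$, so $(x,x) \in \Lambda\spcirc$, and generality of $x$ propagates.

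I expect the main obstacle to be the claim that $\PmPmo \to \Lambda\spcirc$ is generically finite — equivalently, that for general $(x,x') \in \Lambda\spcirc$ there is a \emph{unique} $(n+1)$-plane tangent to $X$ at both points. The argument I would give: a general point $x' \in \Lmx\spcirc$ lies on $\sigma_{n+1}^{L_0}$ for some general $L_0 \in (X_{n+1,x}^*)\spcirc$; by the proof of \autoref{thm:sigma-m+A-in-Lambda}, distinct general tangent $(n+1)$-planes through $x$ meet exactly along $\TT_xX$, and $\TT_{x'}X \neq \TT_xX$ by finiteness of $\gamma_n$, so $\TT_{x'}X \not\subset \TT_xX$; therefore $\lin{\TT_xX,\TT_{x'}X}$ is an $(n+1)$-plane, necessarily equal to $L_0$, and any $L$ in the fibre over $(x,x')$ must contain this $(n+1)$-plane, hence equals it. An alternative and perhaps cleaner route, avoiding this finiteness argument, is to instead directly count $\dim \Lambda\spcirc = \dim \Lmx\spcirc + \dim X$ after establishing $\dim \Lmx\spcirc = N - n$ by yet another route — but since the fibre structure of $\PmPmo \to X$ over a general $x$ is explicitly recorded before the lemma (it is $\{(L,x') : L \in (X_{n+1,x}^*)\spcirc,\ x' \in \sigma_{n+1}^L\}$, of dimension $\dim(X_{n+1,x}^*) + \delta_{n+1} = (N-n-2) + 1$... wait, this needs reconciling), the generic-finiteness approach through $\Lambda$ of dimension $N$ is the one I would commit to, taking as the crux the uniqueness statement above.
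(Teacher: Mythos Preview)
Your proposal is correct and follows essentially the same approach as the paper: both prove $\dim\Lambda = N$ by showing that $\PmPmo \to \Lambda\spcirc$ is generically bijective, then conclude by the fibre-dimension formula for $\rho_1$. The paper phrases the generic-bijectivity step via the shrinking map $\sigma = \sigma_{X_{n+1}^*}$, observing that for a general contact line $F$ the span $L_F := \lin{\bigcup_{y\in F}\TT_yX}$ is forced to equal the unique $(n+1)$-plane $L$ with $\sigma(L)=F$ (smoothness of $X$ makes this span at least $(n+1)$-dimensional), whence $\sigma^{-1}(\overline{xx'})$ is a single point; your argument that $\TT_xX \neq \TT_{x'}X$ (by finiteness of $\gamma_n$) forces $\lin{\TT_xX,\TT_{x'}X} = L_0$ is the same idea restricted to two points of $F$, and is equally valid. (Your abandoned aside miscounts $\dim X_{n+1,x}^*$ as $N-n-2$ rather than $N-n-1$, but this does not enter your actual argument.)
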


\begin{proof}
  Let $\sigma$ be the shrinking map as in \autoref{thm:cone_at_x}.
  Let $L_F:= \lin{\bigcup_{x \in F} \TT_xX} \subset \P^N$ be the linear variety spanned by $\bigcup_{x \in F} \TT_xX$ for general $F \in \sigma(X_{n+1}^*)$.
  Since $X$ is smooth, $\bigcup_{x \in F} \TT_xX$ is of dimension $\geq n+1$, and so is $L_F$.
  Taking $L \in X_{n+1}^*$ such that $F = \sigma(L) = \pi(\gamma_{n+1}^{-1}(L))$, we have
  $L_F \subset L$ and then $L_F = L$.
  In particular, $L_F$ is an $(n+1)$-plane.
  Therefore
  \begin{equation}\label{eq:simga-gen-inj}
    {\sigma}^{-1}(F) = \set*{L \in \Gr(n+1, \PN)}{L_F \subset L}
    = \set{L_F}.
  \end{equation}
  In particular, $\sigma$ is generically injective.

  Now we show that the morphism
  \[
  f: \PmPmo \rightarrow \Lambda\spcirc
  \]
  given by \autoref{eq:PxU-iso}
  is generically bijective, as follows.
  A point of the left hand side is expressed by $(L, x,x')$ as in \ref{eq:PxU-iso-0}.
  For general $(x,x') \in \Lambda\spcirc$,
  we take a point $(L,x,x') \in f^{-1}(x,x')$.
  Then, since $x, x'$ are points of the line $\sigma(L)$,
  we have $\sigma(L) = \overline{xx'}$.
  It means that 
  $f^{-1}(x,x') \simeq \sigma^{-1}(\overline{xx'}) \times \set{(x,x')}$, which is indeed equal to the set of
  a point $(L_{\overline{xx'}}, x, x') \in \Gr(n+1, \PN) \times X \times X$
  because of \autoref{eq:simga-gen-inj}.

  Therefore $\dim(\Lambda\spcirc)
  = \dim (\PmPmo) = N$
  (see \autoref{thm:l-leq}).
  It follows that
  a general fiber of $\Lambda\spcirc \rightarrow X$
  is of dimension $N-n$.
\end{proof}

\begin{lem}\label{thm:Lx=Lx'}
  Let $x \in X$ be a general point.
  Then $\Lmx = \Lambda_{x'}$ for general $x' \in \Lmx$.
  Therefore $\Lmx$ is scheme-theoretically a linear variety of $\PN$.
\end{lem}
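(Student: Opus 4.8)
The goal is to show that for a general point $x \in X$ and a general $x' \in \Lmx$, we have $\Lmx = \Lambda_{x'}$, and then deduce that $\Lmx$ is a linear variety. The plan is to exploit the description of $\Lmx\spcirc$ as a union of lines $\sigma_{n+1}^L$ through $x$, combined with the generic bijectivity of $f : \PmPmo \to \Lambda\spcirc$ established in \autoref{thm:sig_m-fib}, which identifies a general pair $(x,x') \in \Lambda\spcirc$ with the unique $(n+1)$-plane $L = L_{\overline{xx'}}$ and the line $\sigma(L) = \overline{xx'}$.

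First I would observe that, since $x'$ is a general point of the cone $\Lmx$ (\autoref{thm:cone_at_x}), it lies on a general line $\sigma(L) = \sigma_{n+1}^L$ with $L \in (X_{n+1,x}^*)\spcirc$, so both $x$ and $x'$ are general points of this line; in particular $\overline{xx'} = \sigma(L)$. The key point is then to compare the two fibers $\Lmx$ and $\Lambda_{x'}$ through the common line $\overline{xx'}$. One direction — that $\overline{xx'} \subset \Lmx \cap \Lambda_{x'}$ — is immediate since $\sigma_{n+1}^L$ is a line through both $x$ and $x'$ and is contained in $\Lambda_y$ for $y = x$ and, by the same reasoning applied at $x'$ (using that $x'$ is still general in $X$ as $x$ is general and the projections $\rho_i$ are separable), for $y = x'$. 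For the containment $\Lmx \subseteq \Lambda_{x'}$, I would take a general point $x'' \in \Lmx$, so $x''$ lies on a line $\sigma_{n+1}^{L'}$ with $L' \in (X_{n+1,x}^*)\spcirc$; I want to produce an $(n+1)$-plane $L''$ tangent to $X$ at both $x'$ and $x''$, i.e.\ an element of $(X_{n+1,x'}^*)\spcirc$ whose associated line passes through $x''$. The natural candidate is to use the surface $\snxM$ from \autoref{thm:sigma-m+A-in-Lambda}: choosing a general $M \in X_{n+2,x}^*$ containing both $L$ and $L'$ (possible since $\delta_{n+2} = 2$ and $\dim X_{n+1,x}^* $ is large enough), the surface $\snxM$ contains both lines $\sigma_{n+1}^L \ni x'$ and $\sigma_{n+1}^{L'} \ni x''$ and lies in $\Lmx$; now $M$ is also tangent to $X$ at $x'$ (a general point of the line $\sigma_{n+1}^L \subset \sigma_{n+2}^M$), so by \autoref{thm:sigma-m+A-in-Lambda} applied at $x'$ the surface $\snxM$ is swept out by lines $\sigma_{n+1}^{L''}$ with $L'' \in X_{n+1,x'}^*$, $L'' \subset M$, and a dimension count shows one such line passes through the general point $x'' \in \snxM$. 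Hence $x'' \in \Lambda_{x'}$, giving $\Lmx \subseteq \Lambda_{x'}$, and by symmetry (swapping the roles of $x$ and $x'$, both being general points) equality.

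For the final assertion, once $\Lmx = \Lambda_{x'}$ for general $x'$, I would argue as follows: the cone $\Lmx$ with vertex $x$ also has vertex $x'$ for every general $x' \in \Lmx$ (by \autoref{thm:cone_at_x} applied at $x'$, since $\Lambda_{x'} = \Lmx$ is a cone with vertex $x'$). A reduced, irreducible variety that is a cone with vertex at a dense set of its own points is a linear subspace: indeed, for general $x', x'' \in \Lmx$ the whole line $\overline{x'x''}$ lies in $\Lmx$ (being a cone over $x'$ through $x''$), so $\Lmx$ contains the span of a general pair of points hence equals its own linear span. Schemetheoretically, since $\gamma_{n+1}$ is separable the relevant incidences are reduced (cf.\ \autoref{thm:m-th-linearity} and \autoref{thm:sigma_m-eq-cloc}), so $\Lmx$ is reduced and therefore equals this linear variety on the nose.

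The main obstacle I anticipate is the dimension bookkeeping in the middle step: one must check that a general $M \in X_{n+2,x}^*$ can be chosen to contain two prescribed general planes $L, L' \in X_{n+1,x}^*$ (equivalently, that the family of such $M$ is large enough), and that within the surface $\snxM$ — viewed now as swept out by the second family of lines based at $x'$ — a member of that second family genuinely passes through the pre-chosen general point $x''$ rather than missing it. This amounts to verifying that $\snxM$ is a nondegenerate surface (not, say, a cone collapsing the two rulings) and that both rulings are genuinely two-dimensional families of lines on it; the smoothness of $X$, the finiteness of $\gamma_n$, and the established equality $\delta_{n+2} = 2$ should suffice, but the argument needs care to avoid circularity with the very conclusion ($\Lmx$ linear) being proved.
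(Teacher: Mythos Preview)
Your approach is essentially the paper's: fix general $(L,x,x') \in \PmPmo$, take another general $K \in X_{n+1,x}^*$ (your $L'$), set $M = \langle L, K\rangle$, argue that $\sigma_{n+2,x}^M = \sigma_{n+2,x'}^M$, and conclude $\sigma_{n+1}^K \subset \sigma_{n+2,x'}^M \subset \Lambda_{x'}$; the linearity deduction from ``cone with a dense set of vertices'' is also identical to the paper's.

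Two corrections to your obstacle analysis. First, there is no freedom in choosing $M$: since $L \cap L' = \TT_xX$, the span $M = \langle L, L'\rangle$ is the \emph{unique} $(n+2)$-plane containing both, so the issue is not whether such an $M$ exists but whether the resulting pairs $(M,x)$ and $(M,x')$ are general in $P_{n+2}^X$. This is precisely the point the paper spends half its proof on, by parametrizing the data via $P_{n+1}^X \times_X (\PmPmo)$ and checking that the two maps
\[
f : (K,L,x,x') \mapsto (\langle K,L\rangle, x), \qquad f' : (K,L,x,x') \mapsto (\langle K,L\rangle, x')
\]
to $P_{n+2}^X$ are dominant; your sketch flags this as an obstacle but does not resolve it, and without it you cannot invoke \autoref{thm:sigma-m+A-in-Lambda} at $x'$.

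Second, your search for a line of the $x'$-ruling passing through the prescribed point $x''$ is unnecessary. Once $(M,x')$ is known to be general, \autoref{thm:sigma-m+A-in-Lambda} (more precisely, the remark after \ref{eq:Lambda-in-XxX}) already gives the inclusion $\sigma_{n+2,x'}^M \subset \Lambda_{x'}$, and since $x'' \in \sigma_{n+1}^{L'} \subset \sigma_{n+2,x}^M = \sigma_{n+2,x'}^M$ you are done directly; the paper in fact never mentions $x''$ at all, concluding instead that the whole line $\sigma_{n+1}^K \subset \Lambda_{x'}$ for every general $K$, hence $\Lambda_x \subset \Lambda_{x'}$.
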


\begin{proof}
  Let $x \in X$ and $x' \in \Lmx$ be general points as in the statement of this proposition.
  In other word,
  take general $(x,x') \in   \Lambda\spcirc$.
  By definition,
  there exists an $(n+1)$-plane $L$ such that
  $(L, x,x') $ is a general point in $\PmPmo$.

  Take general $K \in  X_{n+1,x}^* $ and set $M=\langle L,K \rangle$,
  where $M$ is an $(n+2)$-plane since $L \cap K = \TT_xX$.
  Then 
  $(x,M) , (x',M) \in P_{n+2}^X$ are general,
  which will be shown later.

  Hence we have the unique irreducible component $\sigma_{n+2,x}^M$ (resp.\ $\sigma_{n+2,x'}^M$)
  of $ \sigma_{n+2}^{M}$ containing $x$ (resp.\ $x'$).
  Since $L, K \subset M$,
  the lines $\sigma_{n+1}^L, \sigma_{n+1}^K$ are contained in $\sigma_{n+2}^M$.
  Hence
  $\sigma_{n+1}^L, \sigma_{n+1}^K$ must be contained in $\sigma_{n+2,x}^M$ because of $x \in \sigma_{n+1}^L, \sigma_{n+1}^K$.
  Since $x' \in  \sigma_{n+1}^L$,
  $x'$ is also contained in $\sigma_{n+2,x}^M$.
  By the uniqueness of $\sigma_{n+2,x'}^M$,
  we have $\sigma_{n+2,x'}^M =\sigma_{n+2,x}^M$.
  Hence it follows from \autoref{thm:sigma-m+A-in-Lambda} that
  $\Lambda_{x'} \supset \sigma_{n+2,x'}^M =\sigma_{n+2,x}^M \supset  \sigma_{n+1}^K$.
  Recall that $K \in  X_{n+1,x}^* $ can be any general element.
  Hence we have
  \[
  \Lambda_{x'}  \supset \overline{\bigcup_{K \in  X_{n+1,x}^* , \text{ general}} \sigma_{n+1}^K } = \Lambda_{x}.
  \]
  Since $\Lambda_{x'}, \Lambda_{x}$ have the same dimension,
  $\Lambda_{x'}= \Lambda_{x}$ holds.

  As a result,
  $\Lmx$ is a cone with vertex $x'$ for general $x' \in \Lmx$,
  because of $\Lmx = \Lambda_{x'}$ and \autoref{thm:cone_at_x}.
  This implies that $\Lmx$ is a linear variety.
  We note that $\Lambda_x$ is reduced since $\rho_1 : \Lambda \rightarrow X$ is separable.

  \vspace{2mm}
  To finish the proof,
  it suffices to check that for general $(L, x,x') \in \PmPmo$ and general $K \in  X_{n+1,x}^* $,
  $(\langle L,K \rangle, x) , (\langle L,K \rangle, x') \in P_{n+1}^X$
  are also general elements.

  To parametrize $(L, x,x') \in \PmPmo$ and $K \in  X_{n+1,x}^* $,
  consider
  \begin{align*}
    P^X_{n+1} \times_X (\PmPmo) &= \{ (K,L,x,x') \, | \, \T_x X , \T_{x'} X \subset L , \T_x X \subset K\} \\  
    &\subset X_{n+1}^* \times (X_{n+1}^*)^{\circ} \times X \times X\\
    &\subset \G(m,\P^N) \times \G(m,\P^N) \times X \times X,
  \end{align*}
  which is the fiber product of
  the projection $  \PmPmo \arw X : (L, x,x') \mapsto x $  and $ P^X_{n+1} \arw X$.

  For general $(K,L, x,x') \in    P^X_{n+1} \times_X (\PmPmo)$,
  $\langle K,L \rangle$ is an $(n+2)$-plane which contains $\T_x X, \T_{x'} X$.
  Hence we have rational maps
  \begin{align*}
    f  &:    P^X_{n+1} \times_X (\PmPmo) \dashrightarrow P_{n+2}^X : (K, L, x,x') \mapsto (\langle K,L \rangle, x) \\
    f' &:  P^X_{n+1} \times_X (\PmPmo) \dashrightarrow P_{n+2}^X : (K, L, x,x') \mapsto (\langle K,L \rangle, x').
  \end{align*}
  The rest is to see that $f,f'$ are dominant.

  For general $(M,x) \in P_{n+2}^X$,
  take general $(n+1)$-planes $K,L \subset M$ containing $\T_x X$.
  Since $(M,x) \in P_{n+2}^X$ is general, we have $(K,x), (L,x) \in (P_{n+1}^X)^{\circ}$.
  For $x' \in \sigma_{n+1}^L$,
  $(K,L,x,x')$ is an element in $P^X_{n+1} \times_X (\PmPmo)$ and $f(K,L,x,x') = (M,x)$.
  Hence $f$ is dominant.

  For general $(M,x') \in P_{n+2}^X$,
  take general $(n+1)$-planes $L \subset M$ which contains $\T_{x'} X$.
  Since $(M,x') \in P_{n+2}^X$ is general, we have $(L,x') \in (P_{n+1}^X)^{\circ}$.
  Take $x \in \sigma_{n+1}^L$.
  Then $(L,x) $ is in $ (P_{n+1}^X)^{\circ}$.
  For a general $(n+1)$-plane $K $ which contains $\T_x X$,
  $(K,L,x,x')$ is an element in $  P^X_{n+1} \times_X (\PmPmo)$ and $f' (K,L,x,x') = (M,x')$.
  Hence $f'$ is dominant.
\end{proof}

Lemmas \ref{thm:sig_m-fib} and \ref{thm:Lx=Lx'} imply
the following result.

\begin{prop}\label{thm:lin-Lam-x}
  Assume that $\gamma_{n+1}$ is separable
  and $\delta_{n+1} = 1$.
  Then the first projection
  $\Lambda \rightarrow X$ of \ref{eq:Lambda-in-XxX}
  induces a rational map
  \[
  X \dashrightarrow \Gr(N-n, \PN)
  \]
  sending $x \mapsto \Lmx$.
  Let $Y \subset \Gr(N-n,\PN)$ be the closure of the image of this rational map.
  For the universal family $\calu_Y \subset Y \times \PN$,
  the second projection $\calu_Y \rightarrow \PN$
  is birational onto $X \subset \PN$.
\end{prop}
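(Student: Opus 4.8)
The plan is to use Lemmas~\ref{thm:sig_m-fib} and~\ref{thm:Lx=Lx'} to package $x\mapsto\Lmx$ into an honest rational map, to identify the image of $\calu_Y\to\PN$ with $X$, and then to prove birationality by exhibiting a rational section of the second projection.

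First I would record what the two lemmas give. By \autoref{thm:sig_m-fib} we have $\dim\Lmx=N-n$ for general $x$, and by \autoref{thm:Lx=Lx'} such an $\Lmx$ is scheme-theoretically a linear subvariety of $\PN$, i.e.\ an $(N-n)$-plane; hence $x\mapsto\Lmx$ is a well-defined rational map $\psi\colon X\dashrightarrow\Gr(N-n,\PN)$, and the closure of its image is the $Y$ of the statement. Here $Y$ is irreducible, being dominated by $X$, and $\calu_Y$ is the restriction to $Y$ of the universal $\PP^{N-n}$-bundle on $\Gr(N-n,\PN)$, so $\calu_Y$ is irreducible of dimension $\dim Y+(N-n)$. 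Moreover the $\psi$-fiber through a general $x$ equals $\set*{x'\in X}{\Lambda_{x'}=\Lmx}$; by \autoref{thm:Lx=Lx'} it contains a dense open subset of $\Lmx$, and since each $\Lambda_{x'}$ is a cone with vertex $x'$ (\autoref{thm:cone_at_x}) it is contained in $\Lmx$, so it has dimension $N-n$. Hence $\dim Y=2n-N$ and $\dim\calu_Y=n=\dim X$.

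Next I would check that $\calu_Y\to\PN$ has image exactly $X$. Since $\Lambda\subset X\times X$, every $\Lmx$ with $x$ general is contained in $X$; the locus $S\subset\calu_Y$ of pairs $(\Lmx,w)$ with $x$ general and $w\in\Lmx$ is dense (it contains the whole $\calu_Y$-fiber over each $F$ in the dense image of $\psi$) and is carried into $X$ by $\calu_Y\to\PN$, so the image is contained in $X$; conversely $x\in\Lmx$ forces the image to contain every general $x$, hence all of $X$. Now I would exhibit the rational section $s\colon X\dashrightarrow\calu_Y$, $z\mapsto(\Lambda_z,z)$, which is defined for general $z$ because then $\Lambda_z\in Y$ and $z\in\Lambda_z$ (cone vertex). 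The composition of $s$ with $\calu_Y\to X$ is clearly $\id_X$. For the other composition I would use the dominant rational map $q\colon\Lambda\dashrightarrow\calu_Y$, $(x,z)\mapsto(\Lmx,z)$, which is well defined wherever $\psi$ is (since $(x,z)\in\Lambda$ gives $z\in\Lmx$) and is dominant because the $\Lmx$ sweep out a dense subset of $Y$ and for each such $\Lmx$ the whole fiber over it is hit. A general point of $\calu_Y$ is therefore $q(x,z)$ for a general $(x,z)\in\Lambda$, i.e.\ with $x$ general in $X$ and $z$ general in $\Lmx$; for such a pair \autoref{thm:Lx=Lx'} gives $\Lmx=\Lambda_z$, so $q(x,z)=(\Lambda_z,z)=s(z)$. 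Hence $s\circ(\calu_Y\to X)$ is the identity on a dense open of $\calu_Y$ as well, so $s$ and the second projection $\calu_Y\to X$ are mutually inverse birational maps, which is the assertion.

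The step I expect to be the main obstacle is the genericity bookkeeping in the last paragraph: one must be sure that a general point of $\calu_Y$ really arises as $q(x,z)$ with $(x,z)$ lying in the dense open of $\Lambda$ over which the conclusion of \autoref{thm:Lx=Lx'} holds --- that is, $x$ general in $X$ and $z=x'$ general in $\Lmx$ --- which amounts to combining the dominance of $q$ with the fact that $\rho_1\colon\Lambda\to X$ is a fibration whose general fiber over $x$ is $\Lmx$. One must also keep track of scheme structures, since $\Lmx=\Lambda_z$ is used here as an equality of reduced $(N-n)$-planes, i.e.\ of points of $\Gr(N-n,\PN)$; this is exactly where the \emph{scheme-theoretic} linearity asserted in \autoref{thm:Lx=Lx'}, and not merely a set-theoretic statement, is needed.
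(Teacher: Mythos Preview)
Your argument is correct and follows essentially the same approach as the paper: define the rational section $s\colon X\dashrightarrow\calu_Y$, $x\mapsto(\Lambda_x,x)$, and use \autoref{thm:Lx=Lx'} to show that a general point of $\calu_Y$ lies in the image of $s$, so that $s$ is a birational inverse of the second projection. The paper's version is slightly more economical --- it simply observes that $s$ is dominant (your argument via $q$ amounts to the same thing) and that $(\calu_Y\to X)\circ s=\id_X$, which already forces birationality; your extra computations of $\dim Y$ and of the image of $\calu_Y\to\PN$ are correct but not needed for the conclusion.
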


\begin{proof}
  Since $\Lmx$ is a linear variety of dimension $N-n$ for general $x$, the rational map $X \dashrightarrow \Gr(N-n,\PN)$ is obtained.
  We consider the graph $g: X \dashrightarrow Y \times \PN$, which sends $x \mapsto (\Lmx, x)$. Since $x \in \Lmx$, the image of $g$ is contained in $\calu_Y$.

  In fact, $g$ is dominant to $\calu_Y$.
  This is because, for general $(\Lambda, x') \in \calu_Y$, we can write $\Lambda = \Lmx$ for some $x$, and then $\Lmx = \Lambda_{x'}$ by \autoref{thm:Lx=Lx'}, which implies that $g(x') = (\Lambda, x')$.

  Since the composite map $X \dashrightarrow \calu_Y \rightarrow X$ is identity, $\calu_Y \rightarrow X$ is birational.
\end{proof}

In fact, we have:

\begin{prop}\label{thm:U_Y-to-X-finite}
  In \autoref{thm:lin-Lam-x}, $\calu_Y \rightarrow X$ is an isomorphism.
\end{prop}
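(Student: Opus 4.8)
The plan is to upgrade the birational morphism $\calu_Y \rightarrow X$ of \autoref{thm:lin-Lam-x} to an isomorphism by showing it is injective (hence, since $X$ is smooth and $\calu_Y \rightarrow X$ is a birational projective morphism, an isomorphism by Zariski's main theorem). Concretely, I would first understand $\calu_Y$ geometrically: a general fiber of $\calu_Y \rightarrow Y$ is the linear space $\Lmx \cong \PP^{N-n}$ (note $\dim \Lmx = N-n$ by \autoref{thm:sig_m-fib}), and $\calu_Y \rightarrow X$ is generically one-to-one, so over a general point $x$ the fiber is a single reduced point corresponding to the $(N-n)$-plane $\Lambda_x$. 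The issue is whether some special point of $X$ lies on two distinct members $\Lambda_{x_1}, \Lambda_{x_2}$ of the family $Y$, or whether the fiber is non-reduced; if neither happens the morphism is bijective with reduced fibres, hence an isomorphism onto the smooth variety $X$.

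The key input is \autoref{thm:Lx=Lx'}: for general $x$ and general $x' \in \Lmx$ one has $\Lmx = \Lambda_{x'}$. So the rational section $X \dashrightarrow \calu_Y$, $x \mapsto (\Lmx, x)$, is defined on a dense open $U \subset X$, and for $x \in U$ the plane $\Lmx$ is the unique member of $Y$ through a general point of itself. The first step is to show that in fact $\Lmx$ depends only on the linear span $\langle \Lmx \rangle$, equivalently that two general members $\Lambda_{x_1}$, $\Lambda_{x_2}$ of $Y$ are either equal or meet $X$ in a set of dimension $< \dim \Lmx$; the point is that if $\Lambda_{x_1} \cap \Lambda_{x_2}$ had dimension $N-n$, it would be all of $\Lambda_{x_1} = \Lambda_{x_2}$ by linearity, forcing the two to coincide. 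Then I would argue that the (rational) map $X \dashrightarrow Y$ extends to a morphism: each $\Lmx$ is a linear $(N-n)$-plane lying inside $X$, so a point $x_0 \in X$ that is a limit of general $x$'s is contained in the limit plane, which is again a linear $(N-n)$-subspace of $X$; the family of $(N-n)$-planes contained in $X$ through a given point is itself proper, and if two such limit planes occurred one would violate the generic injectivity established above by a dimension/incidence count on the total space $\calu_Y \rightarrow X$. Hence $\calu_Y \rightarrow X$ is quasi-finite and birational; being projective it is finite, and by generic injectivity plus normality (smoothness) of $X$ it is an isomorphism.

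I expect the main obstacle to be ruling out the \emph{a priori} possibility that through some special point $x_0 \in X$ there pass two (or more) distinct $(N-n)$-planes of the family $Y$ — i.e., that $\calu_Y \rightarrow X$ is not injective over a proper closed subset. The clean way around this is to use that $\calu_Y \rightarrow X$ is birational and projective with $X$ smooth, so by Zariski's main theorem it suffices to see that it has finite fibres; and the finiteness of fibres follows because a positive-dimensional fibre would force infinitely many distinct $(N-n)$-planes $\Lmx$ through $x_0$, whose union would be a subvariety of $X$ of dimension $> N-n$, and then one re-runs the argument of \autoref{thm:sig_m-fib} (comparing $\dim \PmPmo = N$ with the dimension of $\Lambda\spcirc \subset X\times X$) to get a contradiction with $\dim \Lambda_x = N-n$. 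A secondary point to check is reducedness of the fibres, which again follows from separability of $\rho_1 : \Lambda \rightarrow X$ together with the linearity of $\Lmx$ already recorded in \autoref{thm:Lx=Lx'}.
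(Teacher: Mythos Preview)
Your overall architecture is right: once $\calu_Y \to X$ is known to be birational and projective with $X$ smooth, Zariski's main theorem reduces the problem to finiteness of all fibres. The gap is in your proposed proof of finiteness. You argue that a positive-dimensional fibre over some special $x_0$ would produce, inside $X$, a union of $(N-n)$-planes of dimension $> N-n$, and then ``re-running the argument of \autoref{thm:sig_m-fib}'' would contradict $\dim \Lambda_x = N-n$. But \autoref{thm:sig_m-fib} is a \emph{generic} computation: it says the general fibre of $\rho_1:\Lambda \to X$ has dimension $N-n$, and nothing prevents the fibre over a special $x_0$ from being larger. Likewise, the dimension comparison $\dim(\PmPmo) = N = \dim \Lambda\spcirc$ only constrains the generic behaviour of $\Lambda\spcirc \to X$, not special fibres. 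So the contradiction you sketch does not materialise.

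What is missing is a \emph{uniform} bound, valid at every point of $X$, that forces only finitely many members of $Y$ to pass through any given $x_0$. The paper obtains this by a different route: first (Step~1) it shows by a closed-condition argument on $\calu_Y \times_Y \calu_Y$ that for \emph{every} $(\Lambda,x) \in \calu_Y$ one has $\Lambda \subset \pi(\gamma_{n+1}^{-1}(X_{n+1,x}^*))$ --- the point being that $\dim\langle \TT_xX,\TT_{x'}X\rangle \le n+1$ is a closed condition, verified for general $(x,x')$ and hence for all. Then (Step~2) it invokes Zak's theorem \cite[I, 2.3]{Zak}, which bounds \emph{every} fibre of $\gamma_{n+1}$ by $1$, so that $\pi(\gamma_{n+1}^{-1}(X_{n+1,x}^*))$ has all components of dimension $\le N-n$; each $\Lambda$ through $x$ is then forced to be one of these finitely many components. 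The uniform input from Zak is exactly what your argument lacks.
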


\begin{proof}
  \begin{inparaenum}[\itshape{}Step 1.]
  \item 
    First we show that $\Lambda \subset \pi(\gamma_{n+1}^{-1}(X_{n+1,x}^*))$ holds
    for  any $(\Lambda,x) \in \calu_Y$   as follows.
    Let us define a morphism
    \begin{equation}\label{eq:UYUY-to-GG}
      \calu_Y \times_Y \calu_Y \simeq \set*{(\Lambda, x_1,x_2) \in Y \times \PN \times \PN}{x_1,x_2 \in \Lambda}
      \rightarrow \Gn \times \Gn
    \end{equation}
    by $(\Lambda, x_1,x_2) \mapsto (\TT_{x_1}X, \TT_{x_2}X)$, and define $W$ to be the set of
    $(T_1,T_2) \in \Gn \times \Gn$ such that $\dim \lin{T_1, T_2} \leq n+1$. Note that $W$ is a closed subset of $\Gm \times \Gm$.

    In the above setting, the image of \ref{eq:UYUY-to-GG} is contained in $W$, as follows:
    A general member of the left hand side of \ref{eq:UYUY-to-GG}
    is written as $(\Lambda_x, x,x')$ with general points $x \in X$ and $x' \in \Lambda_x$.
    By definition, $x'$ is contained in $\sigma_{n+1}(L) = \pi(\gamma_{n+1}^{-1}(L))$
    with some $(n+1)$-plane $L \in (X_{n+1,x}^*)\spcirc$.
    This means that $\TT_{x'}X \subset L$.
    Since $\lin{\TT_{x}X, \TT_{x'}X} \subset L$, we have the assertion.

    Take $(\Lambda,x) \in \calu_Y$.   For any $x' \in \Lambda$, we have $\dim\lin{\TT_{x}X, \TT_{x'}X} \leq n+1$
    since \ref{eq:UYUY-to-GG} maps $(\Lambda, x,x')$ into $W$.
    Let $L$ be an $(n+1)$-plane containing $\lin{\TT_{x}X, \TT_{x'}X}$.
    Then $(L,x') \in P_{n+1}^X$ and $L \in X_{n+1,x}^*$, which means that
    $x' \in \pi(\gamma_{n+1}^{-1}(X_{n+1,x}^*))$.
    Thus $\Lambda \subset \pi(\gamma_{n+1}^{-1}(X_{n+1,x}^*))$ holds.
    \vspace{2mm}

  \item 
    Next we show the finiteness of $\calu_Y \rightarrow X$.
    Fix $x \in X$ to be any point.
    By a theorem of Zak \cite[I, 2.3 Theorem]{Zak}, 
    \emph{every} fiber of $\gamma_{n+1}:P_{n+1}^X \rightarrow X_{n+1}^*$ is of dimension $\leq 1$.
    Therefore every irreducible component of $\gamma_{n+1}^{-1}(X_{n+1,x}^*)$
    is of dimension $\leq (N-n-1)+1 = N-n$.
    From Step 1, every $(\Lambda,x) \in \calu_Y$ satisfies $\Lambda \subset \gamma_{n+1}^{-1}(X_{n+1,x}^*)$. Since $\dim(\Lambda) = N-n$, in fact $\Lambda$ coincides with an irreducible component of $\gamma_{n+1}^{-1}(X_{n+1,x}^*)$. Hence there only exist finitely many $\Lambda$'s such that $(\Lambda, x) \in \calu_Y$. It means that
    $\calu_Y \rightarrow X$ is finite.

    By Zariski's main theorem, we find that $\calu_Y \rightarrow X$ is isomorphic
    since it is birational by \autoref{thm:lin-Lam-x} and  $X$ is smooth.
  \end{inparaenum}
\end{proof}

By Zak's inequality,
we know $\delta_{n+i} \leq i$ for smooth $X \subset \PN$.
We study the case when the equality holds.
The following theorem implies \autoref{thm:gamma_n+1-sep-bir-unlessP1Pn-1}

\begin{thm}\label{thm:delta_m=m-n}
  Let $X \subset \PN$ be
  a non-degenerate smooth projective variety
  with $n := \dim(X) < N-2$.
  Assume that $\gamma_{n+1}$ is separable.
  Then the following conditions are equivalent:
  \begin{enumerate}[\quad \normalfont{}(a)]
  \item \label{item:1}
    $\delta_{n+i} = i$ holds for an integer $i > 0$ with $n+i < N-1$.
  \item 
    $\delta_{n+i} = i$ holds for any integer $i \geq 0$ with $n+i \leq N-1$.
  \item $X$ is the image of the Segre embedding $\P^1 \times \P^{n-1} \hookrightarrow \P^{2n-1}$.
  \end{enumerate}
\end{thm}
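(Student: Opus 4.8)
The plan is to establish the cycle of implications $(\ref{item:1}) \Rightarrow (b) \Rightarrow (c) \Rightarrow (\ref{item:1})$. The implication $(\ref{item:1}) \Rightarrow (b)$ is already available: it is exactly \autoref{thm:eq-Zaks-formula}, which propagates the equality $\delta_{n+i_0} = i_0$ to all $i$. The implication $(c) \Rightarrow (\ref{item:1})$ is likewise immediate, since \autoref{thm:P1Pn-1} records that $X = \PP^1 \times \PP^{n-1} \subset \PP^{2n-1}$ satisfies $\delta_{n+1} = 1$ (and indeed $\delta_{n+i} = i$ for all $i \geq 0$), and one checks directly that $\gamma_{n+1}$ is separable in this case (it is generically finite, and separability is easy to verify by an explicit coordinate computation, or note that the contact loci are lines, so \autoref{thm:m-th-linearity}'s necessary conclusion holds without contradiction). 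So the entire content of the theorem lies in the implication $(b) \Rightarrow (c)$, and by $(\ref{item:1}) \Rightarrow (b)$ I may assume $\delta_{n+1} = 1$ from the start.

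Assuming $\delta_{n+1} = 1$ and $\gamma_{n+1}$ separable, I would invoke the machinery built up in this section. By \autoref{thm:lin-Lam-x} and \autoref{thm:U_Y-to-X-finite}, the assignment $x \mapsto \Lambda_x$ defines a morphism $X \to Y \subset \Gr(N-n, \PN)$ whose associated universal family $\calu_Y \to \PN$ is an isomorphism onto $X$. In other words, $X$ is swept out by a family of $(N-n)$-planes $\Lambda_x$, one through each point, and this family is ``without base points'' in the strong sense that the incidence variety maps isomorphically to $X$; equivalently, through a general point $x$ there passes a unique such linear space $\Lambda_x$, and two of them, $\Lambda_x$ and $\Lambda_{x'}$, coincide whenever $x' \in \Lambda_x$ (the key symmetry in \autoref{thm:Lx=Lx'}). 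This says precisely that the $\Lambda_x$'s partition $X$: the relation ``$x' \in \Lambda_x$'' is an equivalence relation whose classes are the linear spaces $\Lambda_x$, each of dimension $N - n$ (as linear varieties of $\PN$), while $\dim X = n$. So $X$ is fibered over a base $B$ of dimension $n - (\dim \Lambda_x \text{ inside } X)$; I need to pin down $\dim(\Lambda_x \cap X)$. Since $\calu_Y \to X$ is an isomorphism, the fiber $\Lambda_x$ sits inside $X$ as a full $(N-n)$-dimensional linear subvariety, so actually $\dim \Lambda_x = N - n \leq n$ forces $n \geq N - n$, but we also know $N > n + 2$; combining, $n \geq 3$ and the fibers are honest positive-dimensional linear subspaces of $X$. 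The base $B$ then has dimension $n - (N-n) = 2n - N$; for this to be a curve we would want $N = 2n - 1$, which is the expected answer, but I should not assume it — rather I should derive it.

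The cleanest route to the conclusion is then: $X$ is a linearly fibered variety over a smooth base $B$, with fibers $(N-n)$-planes. I would next argue that $B$ is a smooth curve. This should follow from the defect computations in \textsection 3: a projective bundle over a base of dimension $b$ with linear fibers has controlled higher defects (cf.\ \autoref{thm:del-geq-kn} and \autoref{thm:cov-by-n-1}), and the hypothesis $\delta_{n+i} = i$ for all $i$ — in particular $\delta_{N-1} = N - n - 1$, i.e.\ $\dim X^* = n$ — is a strong constraint. Concretely, \autoref{thm:cov-by-n-1} computes the defect profile of a linear bundle over a curve and matches it with the required profile $\delta_{n+i} = i$; for a base of dimension $\geq 2$ the profile comes out strictly larger (the variety would be ``more degenerate''), contradicting $\delta_{n+1} = 1$. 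Hence $B$ is a curve, $N - n = n - 1$, i.e.\ $N = 2n - 1$, and $X \subset \PP^{2n-1}$ is an $(n-1)$-plane bundle over a smooth curve $C = B$, embedded so that the fibers are linear. Finally I would identify $C$: using that $\dim X^* = n$ together with $\delta_{n+1} = 1$ pins the bundle down to the Segre $\PP^1 \times \PP^{n-1}$; this last identification is where classical arguments on scrolls enter — a linearly embedded $(n-1)$-plane bundle over $C$ with the minimal possible secant defect and dual dimension must be the Segre, because higher genus or a twisted bundle would increase $\dim X^*$ (or, dually, the variety would not be ``swept by $\PP^{n-1}$'s through two general points lying in a common $\PP^n$'' in the rigid way \autoref{thm:Lx=Lx'} demands). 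The base must be $\PP^1$ since an elliptic or higher-genus scroll has $\delta_{n+2} \leq 1$ realized only in higher ambient dimension (compare the Examples after \autoref{thm:lin-proj-m-def}, where elliptic scrolls force $N = 2n$ and rational scrolls of this defect type force exactly $\PP^1 \times \PP^{n-1}$).

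The main obstacle I anticipate is the final identification step — going from ``$X$ is an $(n-1)$-plane bundle over a smooth curve with $\delta_{n+i} = i$ for all $i$'' to ``$X$ is the Segre $\PP^1 \times \PP^{n-1}$''. Everything before that is clean bundle-theoretic bookkeeping plus the already-proven \autoref{thm:lin-Lam-x} and \autoref{thm:U_Y-to-X-finite}. But ruling out all other scrolls (higher-genus base, or $\PP^1$-base with a non-balanced bundle $\PP(\sO \oplus \sO(a))$ type over $\PP^1$ in higher dimension, etc.) requires either a careful defect/dual-variety computation valid in all characteristics — which is why \autoref{thm:del-geq-kn} and \autoref{thm:cov-by-n-1} were set up — or a direct geometric argument: the linear spaces $\Lambda_x$ of dimension $N - n$ through two general points $x, x'$ with $\langle \TT_x X, \TT_{x'} X\rangle$ of dimension $\leq n+1$ is an extremely restrictive incidence condition that, combined with smoothness and non-degeneracy, should leave only the Segre. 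I would carry this out by computing, for a general $(n-1)$-plane bundle $\PP(\mathcal{E}) \to C$ embedded by a sub-linear-system, the span $\langle \TT_x X, \TT_{x'} X\rangle$ for $x, x'$ in distinct fibers, and checking that dimension $\leq n+1$ forces $\deg \mathcal{E}$ and $g(C)$ to their minimal values, i.e.\ $C = \PP^1$ and the embedding is the Segre one. This is the step where I expect the bulk of the technical work, and where characteristic-freeness needs the most care.
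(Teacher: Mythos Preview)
Your overall architecture is right: the nontrivial implication is $(a)\Rightarrow(c)$, and after reducing to $\delta_{n+1}=1$ via \autoref{thm:eq-Zaks-formula} and invoking \autoref{thm:lin-Lam-x} and \autoref{thm:U_Y-to-X-finite}, you correctly arrive at $X\simeq\calu_Y$, a $\PP^{N-n}$-bundle over a base $Y$ of dimension $2n-N$. The gap is in the next step, where you claim $\dim Y=1$ ``should follow from the defect computations in \textsection 3.'' It does not. \autoref{thm:del-geq-kn} with $k=N-n$ and $m=n+1$ gives only $\delta_{n+1}\geq (N-n)(N-2n)+n$, and when $N\leq 2n-2$ (i.e.\ $\dim Y\geq 2$) this bound is $\leq 1$ for all relevant $n$, so there is no contradiction with $\delta_{n+1}=1$. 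More generally, nothing in \textsection 3 produces an \emph{upper} bound on the base dimension from the defect profile; \autoref{thm:cov-by-n-1} only computes defects \emph{assuming} the base is a curve, it does not rule out higher-dimensional bases.

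The paper's argument at this point is entirely different and is the key idea you are missing: since $X\simeq\calu_Y$ is a projective bundle, $\Pic(X)$ has rank $\geq 2$. But the Barth--Larsen theorem (in characteristic $0$) and Speiser's theorem (in characteristic $p>0$) say that a smooth $n$-fold in $\PN$ with $n\geq (N+2)/2$ has $\Pic$ of rank $1$. Hence $n<(N+2)/2$, which forces $2n-N<2$, i.e.\ $\dim Y\leq 1$; non-degeneracy rules out $\dim Y=0$, so $\dim Y=1$ and $N=2n-1$. As for the final identification---which you flag as ``the bulk of the technical work''---it is in fact a known classical fact: a smooth non-degenerate $n$-fold in $\PP^{2n-1}$ that is a $\PP^{n-1}$-bundle over a curve with linear fibers is the Segre embedding; the paper simply cites \cite[pp.~307--308]{Kleiman1982} or \cite[Proposition~3.1]{SIERRA}. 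So the step you thought was hardest is already in the literature, while the step you glossed over requires an input (low-codimension Picard-group theorems) that is not supplied by the internal results of the paper.
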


\begin{rem}\label{thm:delta_m=m-n_rem}
  Assume that the characteristic is zero. Then
  Ein \cite[Theorem~3.4]{Ein} showed that
  if $\dim(X) = N-2$ and $\delta_{N-1} (= \delta_{n+1}) = 1$,
  then $X = \PP^1 \times \PP^2 \subset \PP^5$.
  Hence combining with \autoref{thm:delta_m=m-n}, we have that
  $\PP^1 \times \PP^{n-1} \subset \PP^{2n-1}$
  is only the smooth projective variety of codimension $\geq 2$
  which satisfies $\delta_{n+i} = i$ for all $i$ with $0 \leq i \leq N-n-1$.

\end{rem}

\begin{proof}[Proof of \autoref{thm:delta_m=m-n}]
  (c) $\Rightarrow$ (b) follows from \autoref{thm:P1Pn-1}.
  (b) $ \Rightarrow $ (a) follows immediately.
  We show (a) $ \Rightarrow $ (c) as follows.

  From \autoref{thm:eq-Zaks-formula}, we have $\delta_{n+1} = 1$.
  From \autoref{thm:U_Y-to-X-finite},
  $X$ is isomorphic to the projective bundle $\calu_Y$ with $\dim(Y) = 2n-N$.
  In particular, $\Pic(X)$ is of rank $\geq 2$.

  If $n \geq (N+2)/2$, then $\Pic(X) = \ZZ$ if the characteristic $p=0$ due to the Barth-Larsen theorem (see \cite[Corollary 3.2.3]{Laz} for example)
  and $\Pic(X)$ is a finitely generated abelian group of rank $1$
  if $p > 0$ due to \cite[Theorem~(3.1)]{Speiser}, respectively.
  Therefore $n < (N+2)/2$ must hold, which implies $\dim(Y) = 1$ and $N=2n-1$.

  Then we find that the isomorphism
  $\calu_Y \rightarrow X \subset \PP^{2n-1}$
  is in fact the Segre embedding;
  see 
  \cite[pp. 307--308]{Kleiman1982} or
  \cite[Proposition 3.1]{SIERRA}.
\end{proof}

\begin{proof}[Proof of \autoref{thm:gamma_n+1-sep-bir-unlessP1Pn-1}]
  Assume that $\gamma_{n+1}$ is separable.
  If $X$ is not the Segre embedding, then it follows from
  \autoref{thm:delta_m=m-n} that $\delta_{n+1} = 0$ must hold.
  Since the contact locus $\pi(\gamma_{n+1}^{-1}(L)) \subset X$ of general $L \in X_{n+1}^*$ is a linear variety because of \autoref{thm:m-th-linearity}, a general fiber is a point.
  This means that $\gamma_{n+1}$ is birational.
\end{proof}

\end{document}